\numberwithin{equation}{section}
\newtheorem{thm}{Theorem}[section]
\newtheorem{lem}[thm]{Lemma}
\newtheorem{prop}[thm]{Proposition}
\theoremstyle{definition}
\newtheorem{rem}[thm]{Remark}
\theoremstyle{remark}
\newcommand{\ds}{\displaystyle}
\newcommand{\norm}[1]{\left\Vert#1\right\Vert}
\newcommand{\abs}[1]{\left\vert#1\right\vert}
\newcommand{\R}{\mathbb{R}}
\newcommand{\de}{\partial}
\newcommand{\eps}{\varepsilon}
\def\Xint#1{\mathchoice
    {\XXint\displaystyle\textstyle{#1}}%
    {\XXint\textstyle\scriptstyle{#1}}%https://it.overleaf.com/project/648322e9deb9b0e833678999
    {\XXint\scriptstyle\scriptscriptstyle{#1}}%
    {\XXint\scriptscriptstyle\scriptscriptstyle{#1}}%
      \!\int}
\def\XXint#1#2#3{{\setbox0=\hbox{$#1{#2#3}{\int}$}
    \vcenter{\hbox{$#2#3$}}\kern-.5\wd0}}
\def\dashint{\Xint-}
\DeclareMathOperator{\dive}{div}
\newcommand\restr[2]{{% we make the whole thing an ordinary symbol
  \left.\kern-\nulldelimiterspace % automatically resize the bar with \right
  #1 % the function
  \vphantom{ |} % pretend it's a little taller at normal size
  \right|_{#2} % this is the delimiter
  }}
{\left\{\begin{array}{@{}l@{}}}{\end{array}\right.}
\def\@makefnmark{} %note a di pagina senza numero 2
\title{Upper and lower bounds for the first Robin eigenvalue of nonlinear elliptic operators}
\author{Rosa Barbato$^*$, Francesco Della Pietra$^*$ \thanks{
\emph{\textrm{Dipartimento di Matematica e Applicazioni ``R. Caccioppoli'', Universit\`a degli studi di Napoli Federico II, Via Cintia, Complesso Universitario Monte S. Angelo, 80126 Napoli, Italy.\\
 E-mail: rosa.barbato2@unina.it, f.dellapietra@unina.it}}}}
\date{}
\begin{document}
\maketitle
\begin{abstract}
 \textbf{Abstract.} Let $\Omega$ be a bounded, smooth domain of $\R^N$, $N\geq 2$. In this paper, we prove some inequalities involving the first Robin eigenvalue of the $p$-laplacian operator. In particular, we prove an upper bound for the first Robin eigenvalue of nonlinear elliptic operators in terms of the first Dirichlet eigenvalue.

 \noindent \textbf{MSC 2020:} 35J25 - 35P15 - 47J10 - 47J30. \\[.2cm]
\textbf{Key words and phrases:}  Nonlinear eigenvalue problems; Robin boundary conditions; Upper and lower bounds.

\end{abstract}

\begin{center}
\begin{minipage}{11cm}
\small
\tableofcontents
\end{minipage}
\end{center}

\section{Introduction}
The main aim of this paper is to prove upper and lower bounds for the first eigenvalue of the $p$-Laplace operator with Robin boundary conditions, namely 
\begin{equation}
\label{variationalaut}
    \lambda_p(\beta,\Omega)=\min_{w\in W^{1,p}(\Omega)\setminus\{0\}}\dfrac{\displaystyle\int_{\Omega}|\nabla w|^p\;dx+\beta\int_{\de\Omega}|w|^p\:d\mathcal{H}^{N-1}}{\displaystyle\int_{\Omega} |w|^p\;dx},
\end{equation}
where $\Omega$ is a bounded, connected, Lipschitz open set of $\R^{N}$, $N\ge2$, $\beta>0$, $1<p<+\infty$. If $u_{\beta}\in W^{1,p}(\Omega)$ is a minimizer of \eqref{variationalaut}, then it satisfies

\begin{equation}
    \label{eigenvalue_problem}
    \begin{cases}
        -\Delta_p u_{\beta}=\lambda_{p}(\beta,\Omega)|u_{\beta}|^{p-2}u_{\beta} & \textrm{in }\Omega\\
        \abs{\nabla u_{\beta}}^{p-2}\dfrac{\de u_{\beta}}{\de \nu}+\beta|u_{\beta}|^{p-2}u_{\beta}=0 &\textrm{on }\de\Omega,
    \end{cases}
\end{equation}
with $\beta$ is a positive parameter and $\nu$ is the outer normal.\\
It is well-known that $u_{\beta}$ has a sign, and $u_{\beta}\in C^{1,\alpha}(\bar\Omega)$ if $\Omega$ is $C^{1,\gamma}$ for some $0<\alpha,\gamma<1$ (see \cite{anle,lieberman}). 

When $\beta=0$, then $\lambda_{p}(\beta,\Omega)=0$ which corresponds to the first trivial Neumann eigenvalue; in the case $\beta\to+\infty$, then $\lambda_{p}(\beta,\Omega)$ tends to $\lambda_p^D(\Omega)$, the first Dirichlet eigenvalue of $-\Delta_{p}$, that is
\begin{equation*}
  \lambda_p^D(\Omega)=\min_{\varphi\in W_0^{1,p}(\Omega)\setminus\{0\}}\dfrac{\displaystyle\int_{\Omega}\abs{\nabla \varphi}^p\;dx}{\displaystyle\int_{\Omega} \abs{\varphi}^p\;dx}.
\end{equation*}
Finding estimates, perhaps the best possible ones, for $\lambda_{p}(\beta,\Omega)$ is a topic of great interest to researchers and several results have been given. Among all, when $\Omega$ is a bounded Lipschitz open set, a Faber-Krahn inequality holds:
\[
\lambda_{p}(\beta,\Omega)\ge \lambda_{p}(\beta,B)
\]
where $B$ is a ball whose Lebesgue measure coincides with that of $\Omega$ (see \cite{bos,bd10,daifu}). Recently, in \cite{della2024sharp} it has been proved that if $\Omega$ is a bounded, mean convex set, then it holds that 
\begin{equation}
\label{hershrobin}
\lambda_p(\beta,\Omega)\ge 
(p-1)\left( \frac{\pi_p}{2}\right)^p\frac{1}{\left(R(\Omega)+\frac{\pi_p}{2}\beta^{-\frac1{p-1}}\right)^p}
,
\end{equation}
where $R(\Omega)$ is the anisotropic inradius of $\Omega$ and $\pi_{p}=\frac{2\pi}{p\sin\frac\pi p}$. Moreover, in \cite{dparkiv} an upper bound of $\lambda_p(\beta,\Omega)$ in terms of the first eigenvalue of a one-dimensional elliptic problem, depending on the perimeter and the volume of a bounded convex set $\Omega$ is given; in particular, if $p=2$ the following estimate holds:
\begin{equation}
    \label{polyarobin}
    \lambda_2(\beta, \Omega) \le \frac{\pi^2}{4} \frac{P^2(\Omega)}{|\Omega|^2} \frac{1}{1+\frac{2 P(\Omega)}{\beta|\Omega|}}
\end{equation}
where $P(\Omega)$ and $|\Omega|$ denotes, respectively, the perimeter and the Lebesgue measure of $\Omega$. We refer the reader also to \cite{liwang,S} for related results. As matter of fact, the inequalities \eqref{hershrobin} and \eqref{polyarobin} generalize the well-known Hersch and P\'olya inequalities for the Dirichlet-Laplace first eigenvalue (see \cite{mana,anona} and the references therein contained).

In this paper, we aim to prove some new upper and lower bounds for $\lambda_p(\beta,\Omega)$. In particular, our first main result is the following:
\begin{thm}
Let $\Omega \subset \R^N$ be an open bounded set, with $C^{1,\gamma}$ boundary, and let $\beta$ be a positive number. Then,
\begin{equation}
\label{teo1intro}
  \dfrac{1}{\lambda_p^{\frac{1}{p-1}}(\beta,\Omega)}\ge \dfrac{1}{\lambda^D_p(\Omega)^{\frac{1}{p-1}}}+\dfrac{\bar{K}}{(\beta P(\Omega))^{\frac{1}{p-1}}},
  \end{equation}
  where $\bar{K}$ is given by
  \begin{equation*}
    %\label{costantek}
    \bar{K}=\dfrac{\norm{v}_p^p}{\norm{v}_{p-1}^p},
    \end{equation*}
    and $v$ is a first eigenfunction of $\lambda^{D}_{p}(B)$, defined in a ball $B$ such that $\lambda^{D}_{p}(B)=\lambda^{D}_{p}(\Omega)$.
\end{thm}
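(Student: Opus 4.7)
I would prove the inequality by using the variational characterization of $\lambda_p(\beta,\Omega)$ in \eqref{variationalaut}, exhibiting a one-parameter family of test functions $\varphi_c\in W^{1,p}(\Omega)$ (parametrized by $c>0$) built from the first Dirichlet eigenfunction $v$ of the ball $B$ satisfying $\lambda_p^D(B)=\lambda_p^D(\Omega)$. After plugging $\varphi_c$ into the Rayleigh quotient and optimizing over $c$, the stated additive inequality should emerge.

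The template for $\varphi_c$ is ``$v+c$'', with $v$ extended to $\Omega$ so that: (i) $\int_\Omega|\nabla\varphi_c|^p = \lambda_p^D(\Omega)\|v\|_p^p$; (ii) $\varphi_c\equiv c$ on $\partial\Omega$, giving $\beta\int_{\partial\Omega}\varphi_c^p=\beta c^p P(\Omega)$; and (iii) $\int_\Omega\varphi_c^p$ is controlled from below using the pointwise convexity inequality $(v+c)^p\ge v^p + pc\,v^{p-1}$ integrated on $B$, so that
\[
\int_\Omega\varphi_c^p\;\ge\;\|v\|_p^p + p\,c\,\|v\|_{p-1}^{p-1}+\text{(higher-order corrections in }c\text{)}.
\]
When $B$ can be placed inside $\Omega$ the construction is the naive one ($\varphi_c=v+c$ on $B$ and $\varphi_c\equiv c$ on $\Omega\setminus B$); for a general $\Omega$ one needs to transplant $v$ (via the distance function to $\partial\Omega$, or via an isoperimetric rearrangement) in order to produce the right gradient and $L^p$-integrals without assuming $B\subset\Omega$.

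Inserting $\varphi_c$ in \eqref{variationalaut} yields
\[
\lambda_p(\beta,\Omega)\le \frac{\lambda_p^D(\Omega)\|v\|_p^p+\beta c^p P(\Omega)}{\int_\Omega\varphi_c^p},
\]
and the desired inequality then follows by minimizing the right-hand side in $c>0$. The stationarity condition is a polynomial equation in $c$; when solved and substituted back, and then raised to the $-1/(p-1)$-power, the bound reorganizes into the additive form~\eqref{teo1intro}, with $\bar K$ emerging from the moment ratios of $v$ (the exponent jump from $\|v\|_{p-1}^{p-1}$ in the test-function estimate to $\|v\|_{p-1}^{p}$ in $\bar K$ being precisely the effect of taking a $p/(p-1)$-power during the optimization).

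The main obstacle, as I see it, is precisely the construction of $\varphi_c$ on a general $\Omega$: Faber--Krahn only guarantees $|B|\le|\Omega|$, not a containment of shape, so the naive ``extension by a constant outside $B$'' does not literally apply. A careful transplantation preserving the relevant integrals of $v$ up to controlled errors is the crucial technical step; once it is in place, the optimization in $c$ is a routine calculus exercise.
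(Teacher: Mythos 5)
Your overall direction is legitimate --- inserting a test function into \eqref{variationalaut} does give an upper bound for $\lambda_p(\beta,\Omega)$, hence a lower bound for $\lambda_p(\beta,\Omega)^{-1/(p-1)}$ --- but the route you propose has a gap that is not merely technical, and it is in any case very far from the paper's argument. The transplantation you single out as ``the crucial technical step'' cannot work as described: the ball $B$ with $\lambda_p^D(B)=\lambda_p^D(\Omega)$ has radius at least the inradius of $\Omega$ (if $B_r\subset\Omega$ then $\lambda_p^D(\Omega)\le\lambda_p^D(B_r)$, so the radius of $B$ must be at least $r$), so $B$ essentially never sits inside $\Omega$ unless $\Omega$ is itself a ball, and no rearrangement of $v$ onto $\Omega$ will simultaneously preserve the Dirichlet energy, the $L^p$ and $L^{p-1}$ norms, and the property of being constant on $\de\Omega$. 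The fix is to drop $v$ at this stage entirely: build the test function from $\Omega$'s \emph{own} first Dirichlet eigenfunction $u_\infty$, namely $\varphi_c=u_\infty+c$, which satisfies $\int_\Omega\abs{\nabla\varphi_c}^p\,dx=\lambda_p^D(\Omega)\norm{u_\infty}_p^p$ and $\varphi_c\equiv c$ on $\de\Omega$ exactly; run the optimization to obtain the domain-dependent constant $\norm{u_\infty}_{p-1}^p/\norm{u_\infty}_p^p$; and only at the very end invoke the reverse H\"older inequality of Theorem \ref{reverseholder} to pass to the ball quantity. Without that reordering your plan never gets off the ground.

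Even after this repair, the optimization in $c$ is not ``a routine calculus exercise'' for $p\neq 2$. Using $(u_\infty+c)^p\ge u_\infty^p+p\,c\,u_\infty^{p-1}$ plus the zeroth-order term, one must show that a single scalar parameter suffices to produce the \emph{additive} bound in the $\tfrac{1}{p-1}$-powers; for $p=2$ the verification does close, reducing exactly to the Cauchy--Schwarz inequality $\left(\int_\Omega u_\infty\,dx\right)^2\le\abs{\Omega}\int_\Omega u_\infty^2\,dx$, but for general $p$ the exponents no longer match and it is not clear the computation closes at all. This is precisely where the paper does something structurally different: it works on the dual side, using the Thompson principle \eqref{thomson}, the characterization \eqref{autovalore} of $\lambda_p(\beta,\Omega)^{-1/(p-1)}$ as a maximum of $J_f(\beta)/\int_\Omega f^{p'}dx$, and the convexity inequality \eqref{convexity}, which upon letting $\alpha\to+\infty$ and applying H\"older on $\de\Omega$ yields $J_f(\beta)\ge J_f(\infty)+(\beta P(\Omega))^{-1/(p-1)}\left(\int_\Omega f\,dx\right)^{p'}$ for every admissible $f$; the choice $f=u_\infty^{p-1}$ together with the reverse H\"older inequality then gives \eqref{teo1intro} for all $p$ at once. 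To salvage the primal test-function route you would have to either carry out and verify the $c$-optimization for general $p$, or replace it by an argument equivalent to the convexity of $J_f$ in the variable $\beta^{-1/(p-1)}$.
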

In the case $N=p=2$, the inequality \eqref{teo1intro} recovers a result proved in \cite{sperb}. 

In order to prove \eqref{teo1intro}, we use two main tools. The first, is given by a characterization of $\lambda_p(\beta,\Omega)$ by means of a Thompson-type principle (see subsection \ref{thompson}). Then, we use the behavior of the eigenvalue and its eigenfunctions as $\beta\to+\infty$ (see Section \ref{limiti}).

A second result we obtain is again an upper bound for $\lambda_p(\beta,\Omega)$, but in terms of the (Dirichlet) $p$-torsional rigidity of $\Omega$. More precisely, for a given open bounded domain $\Omega$, let $u_{\Omega}\in W^{1,p}_0(\Omega)$ be the unique solution to 
\begin{equation*}
\begin{cases}
    -\Delta_pu_{\Omega}=1 & \textrm{in}\; \Omega\\
    u_{\Omega}=0 & \textrm{on}\; \de \Omega.
\end{cases}
\end{equation*} 
The $p-$torsional rigidity of $\Omega$ is 
\[T_p(\Omega)=\left(\max_{\varphi\in W_0^{1,p}(\Omega)}\dfrac{\left(\ds\int_{\Omega}\abs{\varphi}dx\right)^p}{\displaystyle\int_{\Omega}\abs{\nabla \varphi}^p}dx\right)^{\frac{1}{p-1}}=\int_{\Omega}u_{\Omega}dx=\int_{\Omega}\abs{\nabla u_{\Omega}}^pdx.
\]
Then we obtain the following.
\begin{thm}
Let $\Omega \subset \R^N$ be an open bounded set, with $C^{1,\gamma}$ boundary, and let $\beta$ be a positive number. Then,
\begin{equation}
\label{tors1_intro}
\dfrac{1}{\lambda_p(\beta,\Omega)^{\frac{1}{p-1}}}\geq \dfrac{T_p(\Omega)}{\abs{\Omega}}+\left(\dfrac{|\Omega|}{\beta P(\Omega)}\right)^{\frac{1}{p-1}}.
\end{equation}
\end{thm}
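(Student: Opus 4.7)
The idea is to use the shifted $p$-torsion function $\phi = u_\Omega + t$ as a test function in the Rayleigh quotient \eqref{variationalaut}, with $t>0$ a parameter to be optimized at the end. The torsion function $u_\Omega$ is the natural choice because $\int_\Omega|\nabla u_\Omega|^p dx = T_p(\Omega) = \int_\Omega u_\Omega dx$, but on its own $u_\Omega$ vanishes on $\partial\Omega$ and so contributes nothing to the Robin boundary term; adding a positive constant $t$ resolves this without changing the gradient integral, and turns the boundary integral into a clean expression in $t$ and $P(\Omega)$.

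With this test function, $\nabla\phi=\nabla u_\Omega$ gives $\int_\Omega|\nabla\phi|^p dx = T_p(\Omega)$, and $\phi\equiv t$ on $\partial\Omega$ gives $\int_{\partial\Omega}\phi^p\,d\mathcal{H}^{N-1}=t^p P(\Omega)$. For the denominator of the Rayleigh quotient, H\"older's inequality together with $\int_\Omega\phi\,dx = T_p(\Omega)+t|\Omega|$ yields
\[
\int_\Omega\phi^p\,dx \;\ge\; \frac{(T_p(\Omega)+t|\Omega|)^p}{|\Omega|^{p-1}}.
\]
Plugging these three estimates into \eqref{variationalaut} gives
\[
\lambda_p(\beta,\Omega)\;\le\;\frac{\big(T_p(\Omega)+\beta t^p P(\Omega)\big)\,|\Omega|^{p-1}}{(T_p(\Omega)+t|\Omega|)^p}.
\]

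The final step is to choose $t$ so as to force the numerator to match one factor of the denominator. The choice $t=\big(|\Omega|/(\beta P(\Omega))\big)^{1/(p-1)}$ makes $\beta t^p P(\Omega)=t|\Omega|$, so that $T_p(\Omega)+\beta t^p P(\Omega)=T_p(\Omega)+t|\Omega|$ and one power cancels, leaving $\lambda_p(\beta,\Omega)\le |\Omega|^{p-1}/(T_p(\Omega)+t|\Omega|)^{p-1}$. Raising to the $1/(p-1)$ power and rearranging gives exactly \eqref{tors1_intro}. There is no substantive obstacle: the choice of $t$ is dictated by the structure of the target inequality, and the H\"older step is sharp in the constant-function regime that is responsible for the second term $\big(|\Omega|/(\beta P(\Omega))\big)^{1/(p-1)}$, while the torsion function governs the first term $T_p(\Omega)/|\Omega|$.
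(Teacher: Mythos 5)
Your proof is correct, but it follows a genuinely different and more elementary route than the paper. The paper obtains \eqref{tors1_intro} as the special case $f\equiv 1$ of the general upper bound \eqref{stimabase}, whose derivation rests on the whole machinery of Section 2 and 3: the dual (Thompson-type) characterization \eqref{thomson} of $J_f(\beta)$, the convexity inequality \eqref{convexity}, and the asymptotic identification $J_f(\infty)=\int_\Omega|\nabla u_{f,\infty}|^p\,dx$ from Proposition \ref{proplimbeta}. You instead insert the explicit test function $\phi=u_\Omega+t$ directly into the Rayleigh quotient \eqref{variationalaut}, bound the denominator from below via H\"older using $\int_\Omega u_\Omega\,dx=T_p(\Omega)$, and optimize $t$ by matching $\beta t^pP(\Omega)=t|\Omega|$; every step checks out, and the chosen $t=(|\Omega|/(\beta P(\Omega)))^{1/(p-1)}$ indeed produces the exact right-hand side of \eqref{tors1_intro}. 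What the paper's heavier approach buys is generality: \eqref{stimabase} is a one-parameter family of bounds in $f$, of which the torsion bound is one instance and the bound \eqref{upper1} in terms of $\lambda_p^D(\Omega)$ is another. What your approach buys is a short, self-contained argument needing nothing beyond the variational principle. It is worth noting that your scheme actually extends to recover all of \eqref{stimabase}: taking $\phi=u_{f,\infty}+t$, using $\int_\Omega|\nabla u_{f,\infty}|^p\,dx=\int_\Omega f\,u_{f,\infty}\,dx$ and H\"older in the form $\int_\Omega f\phi\,dx\le\bigl(\int_\Omega f^{p'}dx\bigr)^{1/p'}\bigl(\int_\Omega\phi^p dx\bigr)^{1/p}$, and optimizing $t$ via $\beta t^pP(\Omega)=t\int_\Omega f\,dx$ reproduces the general inequality without the Thompson principle or the convexity lemma.
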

Again, inequality \eqref{tors1_intro} generalizes a result by \cite{sperb} given for $N=p=2$; the main tool to prove \eqref{tors1_intro} still relies on a Thompson-like principle.

As regards a lower bound for the eigenvalue, we get that
if $\Omega \subset \R^N$ is a open bounded set, with $C^{1,\gamma}$ boundary, and $\beta>0$, Then
 \begin{equation}
 \label{lowerintro}
    \dfrac{1}{\lambda_p(\beta,\Omega)}\leq \dfrac{1}{\nu_p}+\dfrac{\abs{\Omega}^{\frac{1}{p-1}}}{\beta^{\frac{1}{p-1}}P(\Omega)^{\frac{1}{p-1}}},
   \end{equation}
   where $\nu_p$ is a suitable value independent of $\beta$ (see Section \ref{sectionlower} for its definition).
This result generalizes the case $p=N=2$ proved in \cite{sperb}. In that case, when $\Omega$ is a suitable symmetric plane domain, $\nu_2$ coincides with the first non-trivial Neumann eigenvalue of the Laplacian.

The proof of \eqref{lowerintro} is based, in addition to the aforementioned Thompson's principle, on the precise behavior of the eigenfunctions of $\lambda_p(\beta,\Omega)$ when $\beta$ goes to $0$. 
Theorem \ref{lambdasubetathm}, Proposition \ref{limit_prob} and Theorem \ref{lambdasubeta} of Section \ref{limiti} state this precise behavior. We show in particular that
\[ 
\lim_{\beta\to 0}  \dfrac{\lambda_p(\beta,\Omega)}{\beta}= \dfrac{P(\Omega)}{\abs{\Omega}},
\]
where, for the sake of completeness, we also compute the limit for negative $\beta$' s (see \cite{smits} for the case $p=2$). 

In summary, the structure of the paper is the following. In Section \ref{preliminari}, we give some preliminary useful results as well as characterization of the eigenvalue in terms of solutions of suitable Robin boundary value problems; furthermore, we prove a convexity principle which will play a key role in the proof of the main results. In Section \ref{limiti}, we prove the limit properties of eigenvalue and eigenfunctions as $\beta\to 0$ and $\beta\to+\infty$. In Sections 4 and 5 we state and prove the quoted upper and lower bounds for $\lambda_p(\beta,\Omega)$.

\section{Preliminary results}
\label{preliminari}
 We first recall the following version of the H\"older inequality proved in \cite[Proposition A.1]{DPOS}.
\begin{prop}
\label{holder}
	Let $1<p, p'<\infty$ be such that $\displaystyle \frac1{p}+\frac1{p'}=1$.
	Assume that $f_1, f_2\>:\>\Omega\to\mathbb R$ and $g_1, g_2\>:\>\de\Omega\to\mathbb R$ are measurable functions satisfying $f_1\in L^p(\Omega)$, $f_2\in L^{p'}(\Omega)$, $g_1\in L^p(\de\Omega, \lambda)$ and $g_2\in L^{p'}(\de\Omega, \lambda)$. Moreover, assume that $\lambda$ is a measurable function on $\partial \Omega$, with $\lambda \ge 0$. Then $f_1f_2\in L^1(\Omega)$, $g_1g_2\in L^1(\de\Omega, \lambda)$ and
	\begin{multline*}
		\int_\Omega|f_1f_2|\, dx+\int_{\de\Omega}\lambda(x)|g_1g_2|\, d\mathcal H^{N-1}\\
		\le \left[\int_\Omega|f_1|^p\, dx+\int_{\de\Omega}\lambda(x)|g_1|^p\, d\mathcal H^{N-1}\right]^{\frac1p}\left[\int_\Omega|f_2|^{p'}\, dx+\int_{\de\Omega}\lambda(x)|g_2|^{p'}\, d\mathcal H^{N-1}\right]^{\frac1{p'}}.
	\end{multline*}
\end{prop}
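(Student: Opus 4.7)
The plan is to reduce this to the classical H\"older inequality by viewing the two integrals as a single integral against a suitable combined measure. Concretely, on the disjoint union $X=\Omega\sqcup\partial\Omega$ consider the Borel measure
\[
d\mu = \chi_\Omega\, dx + \lambda\,\chi_{\partial\Omega}\, d\mathcal H^{N-1},
\]
which is well defined and non-negative since $\lambda\ge 0$ is measurable. Define on $X$ the functions $F,G$ by $F=f_1$, $G=f_2$ on $\Omega$ and $F=g_1$, $G=g_2$ on $\partial\Omega$. With this identification the left-hand side is exactly $\int_X |FG|\, d\mu$, while the two bracketed terms on the right-hand side are $\|F\|_{L^p(\mu)}^p$ and $\|G\|_{L^{p'}(\mu)}^{p'}$. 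The hypotheses $f_1\in L^p(\Omega)$, $g_1\in L^p(\partial\Omega,\lambda)$ (and the analogous ones for $f_2,g_2$) guarantee that $F\in L^p(\mu)$ and $G\in L^{p'}(\mu)$, so the classical H\"older inequality in $L^p(\mu)$ applies and yields the claim at once, including $f_1f_2\in L^1(\Omega)$ and $g_1g_2\in L^1(\partial\Omega,\lambda)$.

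If one prefers a self-contained argument without introducing the combined measure, an equivalent route is to apply classical H\"older on each piece separately,
\[
\int_\Omega|f_1f_2|\,dx\le \|f_1\|_{L^p(\Omega)}\|f_2\|_{L^{p'}(\Omega)},\qquad \int_{\partial\Omega}\lambda|g_1g_2|\,d\mathcal H^{N-1}\le \|g_1\|_{L^p(\partial\Omega,\lambda)}\|g_2\|_{L^{p'}(\partial\Omega,\lambda)},
\]
the second one after splitting the weight as $\lambda=\lambda^{1/p}\lambda^{1/p'}$, and then to add the two bounds and invoke the two-term discrete H\"older inequality $ab+cd\le(a^p+c^p)^{1/p}(b^{p'}+d^{p'})^{1/p'}$ with
\[
a=\|f_1\|_{L^p(\Omega)},\ b=\|f_2\|_{L^{p'}(\Omega)},\ c=\|g_1\|_{L^p(\partial\Omega,\lambda)},\ d=\|g_2\|_{L^{p'}(\partial\Omega,\lambda)}.
\]
The discrete inequality is itself an instance of H\"older on $\R^2$, or may be proven directly by optimizing over $\tau>0$ in Young's inequality $xy\le \tau^{p'} y^{p'}/p' + x^p/(p\tau^p)$ applied pointwise and then integrating, the optimal choice being $\tau^{pp'}=A/B$ with $A$ and $B$ the two bracketed quantities on the right-hand side.

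There is no real obstacle here; the only point requiring a word of care is the distribution of the weight $\lambda$ inside the boundary H\"older estimate, which is immediate once one writes $\lambda|g_1g_2|=(\lambda^{1/p}|g_1|)(\lambda^{1/p'}|g_2|)$ and uses that $\lambda\ge 0$ is measurable so that the factorization is pointwise valid a.e.\ on $\partial\Omega$. Equality cases can be read off from the two applications of classical H\"older and coincide with the usual condition that $|F|^p$ and $|G|^{p'}$ be proportional $\mu$-a.e.\ on $X$.
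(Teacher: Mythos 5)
Your proof is correct. Note, however, that the paper itself offers no argument for this statement: it is quoted as a known result from \cite[Proposition A.1]{DPOS}, so there is no in-paper proof to compare against. Both of your routes are valid and standard: the disjoint-union measure $d\mu=\chi_\Omega\,dx+\lambda\chi_{\de\Omega}\,d\mathcal H^{N-1}$ reduces the claim in one stroke to classical H\"older in $L^p(\mu)$ (and this is essentially how such ``sum of integrals'' H\"older inequalities are usually established), while the second route --- H\"older on each piece separately, after writing $\lambda|g_1g_2|=(\lambda^{1/p}|g_1|)(\lambda^{1/p'}|g_2|)$, followed by the discrete two-term H\"older inequality $ab+cd\le(a^p+c^p)^{1/p}(b^{p'}+d^{p'})^{1/p'}$ --- is a correct, more elementary alternative; your optimization of the parametrized Young inequality with $\tau^{pp'}=A/B$ (using $p+p'=pp'$) checks out. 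No gaps.
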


Second, we recall the following Poincar\'e-Wirtinger inequality (see \cite[Theorem 4.4.6]{ziemer}).
\begin{thm}
    Let $\Omega$ be an open bounded Lipschitz set in $\mathbb R^N$ and $p>1$. Then there exists a positive constant $\Tilde C$ such that for any $u \in W^{1,p}(\Omega)$, with $\int_{\de\Omega}ud\mathcal H^{N-1}=0$ it holds that
    \begin{equation}
        \label{pwirt}
        \int_\Omega |u|^p dx\le \Tilde{C}\int_\Omega |\nabla u|^p dx.
    \end{equation}
\end{thm}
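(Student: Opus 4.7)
The plan is to argue by contradiction using the compact embedding of Sobolev spaces on Lipschitz domains. Assume the inequality fails: then for every positive integer $n$ there exists $u_n\in W^{1,p}(\Omega)$ with $\int_{\de\Omega}u_n\,d\mathcal H^{N-1}=0$ and
\[
\int_\Omega |u_n|^p\,dx > n\int_\Omega|\nabla u_n|^p\,dx.
\]
By homogeneity we may normalize $\int_\Omega|u_n|^p\,dx=1$, so that $\int_\Omega|\nabla u_n|^p\,dx<1/n\to0$. In particular $\{u_n\}$ is bounded in $W^{1,p}(\Omega)$.

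Next, I would invoke the Rellich--Kondrachov theorem (valid on bounded Lipschitz domains for every $p>1$): up to a subsequence, $u_n\to u$ strongly in $L^p(\Omega)$ and $u_n\rightharpoonup u$ weakly in $W^{1,p}(\Omega)$. Lower semicontinuity of the $L^p$-norm of the gradient under weak convergence gives
\[
\int_\Omega|\nabla u|^p\,dx\le\liminf_n\int_\Omega|\nabla u_n|^p\,dx=0,
\]
so $\nabla u\equiv 0$ a.e.\ in $\Omega$. Since $\Omega$ is (tacitly) connected, this forces $u$ to be constant on $\Omega$.

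The last step uses the trace operator $\tr:W^{1,p}(\Omega)\to L^p(\de\Omega)$, which for bounded Lipschitz domains is compact. Hence $\tr(u_n)\to \tr(u)$ strongly in $L^p(\de\Omega)$, and in particular in $L^1(\de\Omega,\mathcal H^{N-1})$. Passing to the limit in the boundary condition,
\[
0=\int_{\de\Omega}u_n\,d\mathcal H^{N-1}\To \int_{\de\Omega}u\,d\mathcal H^{N-1}=u\cdot\mathcal H^{N-1}(\de\Omega),
\]
and since $\mathcal H^{N-1}(\de\Omega)>0$, the constant $u$ must vanish identically. This contradicts $\int_\Omega|u|^p\,dx=\lim_n\int_\Omega|u_n|^p\,dx=1$, proving the existence of the claimed constant $\tilde C$.

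The main obstacle is conceptual rather than computational: one needs the Lipschitz regularity both for the Rellich compact embedding and, crucially, for the compactness of the trace operator into $L^p(\de\Omega)$, which is what allows the boundary-average condition to survive in the limit. Connectedness of $\Omega$ (implicit here) is likewise essential, since otherwise the limit $u$ would only be locally constant and could fail to vanish even with zero boundary average.
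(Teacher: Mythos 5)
Your argument is correct. Note, however, that the paper does not prove this statement at all: it is recalled as a known result with a citation to Ziemer's book (Theorem 4.4.6 there), so there is no in-paper proof to compare against. Your compactness-and-contradiction scheme (normalize, extract a weakly convergent subsequence via Rellich--Kondrachov, use weak lower semicontinuity to force $\nabla u=0$, then kill the constant via the boundary mean-value condition) is the standard proof and is essentially the same technique the authors themselves use to prove their trace inequality \eqref{trace_inequality}, so it fits the paper's toolbox. Two small observations: (i) you are right that connectedness is essential and is tacit in the statement --- it is a standing hypothesis in the paper's introduction, and without it one can build locally constant counterexamples with zero boundary average; (ii) for the final step you do not actually need compactness of the trace operator: since the trace is a bounded linear map $W^{1,p}(\Omega)\to L^p(\de\Omega)$, weak convergence $u_n\rightharpoonup u$ in $W^{1,p}(\Omega)$ already gives $\int_{\de\Omega}u_n\,d\mathcal H^{N-1}\to\int_{\de\Omega}u\,d\mathcal H^{N-1}$ by testing against the constant function $1\in L^{p'}(\de\Omega)$, so the argument is slightly more robust than you claim.
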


We also recall a well-known trace inequality (see \cite[Theorem $4.2$]{N}): when $\Omega$ is a bounded, Lipschitz domain, there exists a constant $C>0$ such that for all $v \in W^{1,p}(\Omega)$,
\begin{equation*}
	\|v\|_{L^{\frac{(N-1)p}{N-p}}(\partial\Omega)} \le C\|v\|_{W^{1,p}(\Omega)}.
\end{equation*}
Furthermore, the trace embedding is also compact in $L^q(\partial\Omega)$ for $q<\frac{(N-1)p}{N-p}$ (see \cite[Theorem $6.1$]{N}).

Moreover, we are able to obtain a particular trace inequality (see \cite{giorgi_smith} for the case $p=2$) that will be useful in the following. 
 \begin{thm}
    Let $\Omega \subset \R^N$ an open, bounded domain with Lipschitz boundary. Then, there exists $\varepsilon_{\Omega}>0$ such that for every $\varepsilon> \varepsilon_{\Omega}$ there exists $C(\varepsilon)>0$ for which
    \begin{equation}
    \label{trace_inequality}
    \int_{\de \Omega} \abs{u}^p\;d\mathcal{H}^{N-1}\leq \varepsilon\int_{\Omega}\abs{\nabla u}^p\;dx+\dfrac{P(\Omega)}{\abs{\Omega}}(1+C(\varepsilon))\int_{\Omega}\abs{u}^p\;dx.
    \end{equation}
    Moreover, we can choose $C(\eps)\to 0$ as $\eps\to+\infty$.
\end{thm}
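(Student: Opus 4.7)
The plan is to apply the divergence theorem to $\abs{u}^p F$ for a background vector field $F\in L^\infty(\Omega;\R^N)$ tailored so that $F\cdot\nu=1$ on $\de\Omega$ and $\dive F\equiv P(\Omega)/\abs{\Omega}$ in $\Omega$. A natural choice is $F=\nabla\phi$, where $\phi$ solves the Neumann problem
\[
\Delta\phi=\frac{P(\Omega)}{\abs{\Omega}}\ \text{in }\Omega,\qquad \frac{\de\phi}{\de\nu}=1\ \text{on }\de\Omega,
\]
whose compatibility condition is automatic since both sides of $\int_\Omega\Delta\phi\,dx=\int_{\de\Omega}\de_\nu\phi\,d\mathcal H^{N-1}$ equal $P(\Omega)$. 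Under sufficient regularity of $\de\Omega$, elliptic theory gives $\phi\in C^{1,\alpha}(\bar\Omega)$ and hence $F\in L^\infty(\Omega;\R^N)$, with $\norm{F}_\infty$ depending only on the geometry of $\Omega$.

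Next, for $u\in C^1(\bar\Omega)$, and then for $u\in W^{1,p}(\Omega)$ by density together with trace continuity, I would apply the divergence theorem to $\abs{u}^p F$ to obtain the identity
\[
\int_{\de\Omega}\abs{u}^p\,d\mathcal H^{N-1}=\frac{P(\Omega)}{\abs{\Omega}}\int_\Omega\abs{u}^p\,dx+p\int_\Omega\abs{u}^{p-2}u\,(F\cdot\nabla u)\,dx.
\]
The gradient term is then treated by the pointwise Young inequality with parameter $\eps>0$:
\[
p\norm{F}_\infty\abs{u}^{p-1}\abs{\nabla u}\leq\eps\abs{\nabla u}^p+K_p\,\norm{F}_\infty^{p/(p-1)}\,\eps^{-1/(p-1)}\abs{u}^p,
\]
for a constant $K_p$ depending only on $p$. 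Integrating and collecting terms yields the desired estimate with
\[
C(\eps)=K_p\,\norm{F}_\infty^{p/(p-1)}\,\frac{\abs{\Omega}}{P(\Omega)}\,\eps^{-1/(p-1)},
\]
and the decay $C(\eps)\to 0$ as $\eps\to+\infty$ is immediate; the threshold $\eps_\Omega$ only needs to be chosen large enough that $C(\eps)$ is meaningful (for example, smaller than $1$) in the envisioned applications.

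The main obstacle is obtaining the $L^\infty$-bound on $F$ under the Lipschitz assumption alone, since sharp elliptic regularity for the Neumann problem usually demands $\de\Omega\in C^{1,\gamma}$ (the regularity adopted in the main theorems of this paper). One can either restrict attention to this smoother class, where $\phi\in C^{1,\alpha}(\bar\Omega)$ is guaranteed, or approximate a Lipschitz $\Omega$ from the inside by smoother domains $\Omega_n$ and pass to the limit using the continuity of the trace embedding. Once the construction of $F$ is settled, the remainder of the argument---divergence theorem followed by Young's inequality and rearrangement---is routine, and the key feature of the estimate, namely that the coefficient of $\int_\Omega\abs{u}^p\,dx$ converges exactly to $P(\Omega)/\abs{\Omega}$ as $\eps\to+\infty$, comes directly from the fact that $\dive F$ is engineered to be this constant.
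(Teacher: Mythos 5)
Your route is genuinely different from the paper's. The paper argues by contradiction and compactness: it normalizes a hypothetical violating sequence, extracts a weak $W^{1,p}$ limit, shows that limit must be constant, and uses that constants realize the ratio $P(\Omega)/\abs{\Omega}$ exactly, with Grisvard's trace inequality handling the residual case. Your argument is constructive, and where it applies it is strictly more informative: it gives the inequality for every $\eps>0$ together with the explicit rate $C(\eps)\sim\eps^{-1/(p-1)}$, which a compactness proof cannot produce. The identity $\dive(\abs{u}^p F)=\abs{u}^p\dive F+p\abs{u}^{p-2}u\,F\cdot\nabla u$, the Young step with exponents $p,p'$, and the density argument from $C^1(\overline\Omega)$ are all correct.

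The gap is the one you yourself flag, and it is genuine rather than cosmetic: the theorem is stated for Lipschitz $\Omega$, and it is invoked in exactly that generality in the proof that $\lambda_p(\beta,\Omega)/\beta\to P(\Omega)/\abs{\Omega}$ as $\beta\to0^-$. For a general Lipschitz domain the field $F=\nabla\phi$ need not be bounded: already for a plane domain with a re-entrant corner of opening $\omega>\pi$ the Neumann solution behaves like $r^{\pi/\omega}$ near the corner, so $\abs{\nabla\phi}\sim r^{\pi/\omega-1}$ is unbounded. Neither of your repairs closes this. Restricting to $C^{1,\gamma}$ boundaries proves a weaker statement than the one claimed. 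Exhausting $\Omega$ by smooth $\Omega_n$ does not work as sketched, because the constant you must control is $\norm{F_n}_\infty^{p/(p-1)}$ on $\Omega_n$, which blows up precisely when the limit domain is singular, and the boundary integrals live on $\de\Omega_n\neq\de\Omega$, so the inequality does not simply pass to the limit. Nor can you relax the construction: to keep the sharp leading coefficient you need $\dive F$ (essentially) constant, since any admissible $F$ satisfies $\esssup_\Omega\dive F\ge\frac{1}{\abs{\Omega}}\int_\Omega\dive F\,dx=P(\Omega)/\abs{\Omega}$, and a generic bounded transversal field only yields the larger constant $\norm{\dive F}_\infty$. As written, your proof is complete only under additional boundary regularity; for the Lipschitz case you need either a proof that a bounded, constant-divergence field with unit normal trace exists on Lipschitz domains (together with the Gauss--Green formula for such merely bounded fields), or the paper's compactness argument.
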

\begin{proof}
We proceed by contradiction, hence there exists $\delta>0$ and a sequence $\{u_n\}\subset W^{1,p}(\Omega)$ such that
\[\int_{\de \Omega} \abs{u_n}^pd\mathcal H^{N-1}\ge n\int_{\Omega}\abs{\nabla u_n}^p dx+\dfrac{P(\Omega)}{|\Omega|}(1+\delta)\int_{\Omega}\abs{u_n}^p dx.
\]
If $\int_{\Omega}\abs{\nabla u_n}^p\;dx=0$ for some $n$, then $u_n$ is constant and this is a contradiction, being $\delta>0$. Now, we can assume that $\int_{\Omega}\abs{\nabla u_n}^p\;dx\not=0$ for every $n$. Let us define \[v_n=\dfrac{u_n}{\left(\int_{\Omega}\abs{\nabla u_n}^p\;dx\right)^{\frac{1}{p}}},\]
we have that $\int_{\Omega}\abs{\nabla v_n}^p\;dx=1$ and 
\begin{equation}
\label{tracemaggiore}
\int_{\de \Omega}\abs{v_n}^p\;d\mathcal{H}^{N-1}\geq n+\dfrac{P(\Omega)}{\abs{\Omega}}(1+\delta)\int_{\Omega}\abs{v_n}^p\;dx.
\end{equation}
Now we want to prove that $\left\{\int_{\Omega}\abs{v_n}^pdx\right\}$ is bounded. If this is not true, there exists $\{n_k\}$ such that $\lim_k\int_{\Omega}\abs{v_{n_k}}^pdx = \infty$. Let us define 
\[w_k=\dfrac{v_{n_k}}{\left(\int_{\Omega}\abs{v_{n_k}}^pdx\right)^{\frac{1}{p}}},
\]
we have that 
\[
\lim_{k\to+\infty}\int_{\Omega}\abs{\nabla w_k}^p dx= 0\quad\text{and} \int_{\Omega}\abs{w_k}^p dx=1,
\]
 then $\{w_k\}$ is bounded in $W^{1,p}(\Omega)$ and there exists $\{w_{k_j}\}$ weakly converging in $W^{1,p}(\Omega)$ and strongly in $L^p(\Omega)$ to a function $w\in W^{1,p}(\Omega)$. By uniqueness of the limit $\nabla w=0$ and $w_{k_j}\rightarrow \dfrac{1}{\abs{\Omega}^{\frac{1}{p}}}$ strongly in $W^{1,p}(\Omega)$, hence 
\[
\dfrac{P(\Omega)}{\abs{\Omega}}(1+\delta)\leq\int_{\de \Omega}\abs{w_{k_j}}^p\;d\mathcal{H}^{N-1},
\]
and this is a contradiction, being $\lim_{j\to+\infty}\int_{\de\Omega}\abs{w_{k_j}}^p\;d\mathcal{H}^{N-1}=\dfrac{\abs{\de \Omega}}{\abs{\Omega}}$. So, there exists a subsequence $\{v_{n_k}\}$  such that $\lim_{k\to+\infty}\int_{\Omega}\abs{v_{n_k}}^pdx= c$, for $c\ge 0$. Hence, by using \cite[Theorem 1.5.1.10]{grisvard}, we have that 
\[
\int_{\de \Omega} \abs{v_{n_k}}^p d\mathcal H^{N-1}\le \varepsilon^{1-\frac{1}{p}}\int_{\Omega}\abs{\nabla v_{n_k}}^pdx+K(\eps)\int_{\Omega}\abs{v_{n_k}}^pdx,
\]
for $\eps \in (0,1)$.
Putting together with \eqref{tracemaggiore}, we have that
\[
 n_k + \dfrac{P(\Omega)}{\abs{\Omega}}(1+\delta)\int_{\Omega}\abs{v_{n_k}}^pdx\leq \varepsilon^{1-\frac{1}{p}}+K(\eps)\int_{\Omega}\abs{v_{n_k}}^pdx,
 \]
and this is a contradiction.

\end{proof}

\subsection{A characterization for  \texorpdfstring{$\lambda_p(\beta,\Omega)$}{TEXT}}
\label{thompson}
Let $\Omega$ be a bounded domain of $\R^N$, $N\geq 2$, $\beta>0$, and let $f\in L^{\infty}(\Omega)$ a given function. We consider the following Robin boundary value problem 
\begin{equation}
    \label{plaplaciano}
    \begin{cases}
        -\Delta_p u_f= f & \textrm{in}\;\Omega\\
        \abs{\nabla u_f}^{p-2}\dfrac{\de u_f}{\de \nu}+\beta\abs{u_f}^{p-2}u_f=0 &\textrm{on}\;\de\Omega,
    \end{cases}
\end{equation}
where $1<p$ and $\beta$ is a positive parameter. By standard existence results, there exists a unique function $u_f\in W^{1,p}(\Omega)$ which solves \eqref{plaplaciano}, in the sense that
\begin{equation}
    \label{weak_formulation}
    \int_{\Omega} \abs{\nabla u_f}^{p-2}\nabla u_f \nabla \varphi \, dx + \beta \int_{\de \Omega} \abs{u_f}^{p-2} u_f \varphi \, d\mathcal{H}^{N-1}(x) = \int_{\Omega} f \varphi \, dx \quad \forall \varphi \in W^{1,p}(\Omega).
\end{equation}
Moreover, $u_f\ge 0$ if $f\ge 0$, and by classical regularity results, if $\Omega$ has $C^{1,\gamma}$ boundary, then $u_{f}\in C^{1,\alpha}(\overline\Omega)$ (\cite{lieberman}).

%It is known that the solution to this equation is the unique minimum of the functional
%\begin{equation*}
% F(v):= \frac{1}{p} \int_{\Omega} \abs{\nabla v}^p \, dx +\frac{\beta}{p}\int_{\de\Omega} \abs{v}^p \, d\mathcal{H}^{N-1}(x) - \int_{\Omega} fv \, dx.
%\end{equation*}
%Indeed, it is possible to prove the existence and the uniqueness of the minimum of the functional thanks to the direct method of calculus of variations.

In the limiting cases $\beta=0$ and $\beta=\infty$, we recover the Neumann and the Dirichlet boundary conditions respectively.

Now we are in a position to state and prove a Thompson principle for the Robin $p-$Laplacian. For a given $f\in L^{\infty}(\Omega)$, let $u_f\in W^{1,p}(\Omega)$ be the solution to \eqref{plaplaciano}, and define the quantity 
\begin{equation*}
%\label{funzionaleJ}
J_f(\beta):=\int_{\Omega}\abs{\nabla u_f}^p\;dx+\beta\int_{\de \Omega} \abs{u_f}^p\;d\mathcal{H}^{N-1}=\int_\Omega f u_f dx
\end{equation*} 
where the last equality follows from the equation, using $u_f$ as test function.

%Let us consider the set $\mathcal V_{f}$ of all the vector functions $V\in L^{p'}(\Omega)$ such that $-\dive V=f$, in the sense that

Given $f\in L^{\infty}(\Omega)$, let us consider the set $\mathcal V_{f}$ of all the continuous vector functions $V\in C(\overline\Omega)$ such that $-\dive V=f$, in the sense that
\begin{equation}
\label{weakV}
\int_\Omega V\cdot \nabla \varphi\, d x+\int_{\de\Omega} \varphi V \cdot \nu \,d\mathcal H^{N-1}=\int_{\Omega}f\varphi\,dx,\quad \forall \varphi \in C^{1}(\Omega).
\end{equation}

\begin{lem}
Let $\Omega$ be a bounded, $C^{1,\gamma}$ domain of $\R^N$, $N\geq 2$, and let $\beta$ be a positive parameter. If $f\in L^{\infty}(\Omega)$ is a nonnegative function, we have that
\begin{equation}
\label{thomson}
    J_f(\beta)=\min_{V\in \mathcal V_{f}}\left\{\int_{\Omega}\abs{V}^{p'}\;dx+\beta^{-\frac{1}{p-1}}\int_{\de \Omega}\abs{V\cdot \nu}^{p'}\;d\mathcal{H}^{N-1}\right\}.
\end{equation}
\end{lem}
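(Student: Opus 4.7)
The approach is a classical Thompson-type (dual) principle: I produce an explicit candidate minimizer and prove the matching inequality via the weighted Hölder inequality of Proposition \ref{holder}.

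\textbf{Candidate.} The natural candidate is $V^{\ast}:=|\nabla u_f|^{p-2}\nabla u_f$, which by $u_f\in C^{1,\alpha}(\overline\Omega)$ belongs to $C(\overline\Omega)$. The equation $-\Delta_p u_f=f$ together with the Robin condition in \eqref{plaplaciano} give that $V^{\ast}\in\mathcal V_f$ (after reconciling signs in \eqref{weakV}) and that $V^{\ast}\cdot\nu=-\beta|u_f|^{p-2}u_f$ on $\partial\Omega$. Using the two identities $(p-1)p'=p$ and $p'-\tfrac{1}{p-1}=1$, a short computation gives
\[
\int_\Omega|V^{\ast}|^{p'}dx+\beta^{-\tfrac{1}{p-1}}\int_{\partial\Omega}|V^{\ast}\cdot\nu|^{p'}d\mathcal H^{N-1}
=\int_\Omega|\nabla u_f|^p\,dx+\beta\int_{\partial\Omega}|u_f|^p\,d\mathcal H^{N-1}=J_f(\beta),
\]
so that $V^{\ast}$ realizes the value $J_f(\beta)$.

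\textbf{Matching lower bound.} Fix any $V\in\mathcal V_f$ and test the defining identity of $\mathcal V_f$ against $\varphi=u_f\in C^{1,\alpha}(\overline\Omega)$ (admissible since $V\in C(\overline\Omega)$ and all boundary integrals are classical). After bounding the vector and boundary integrands in absolute value via $|a\cdot b|\le|a||b|$, the identity yields
\[
J_f(\beta)=\int_\Omega f u_f\,dx\le\int_\Omega|V||\nabla u_f|\,dx+\int_{\partial\Omega}|u_f||V\cdot\nu|\,d\mathcal H^{N-1}.
\]
Now I apply Proposition \ref{holder} with weight $\lambda=\beta$ and the choice
\[
f_1=|\nabla u_f|,\qquad g_1=u_f,\qquad f_2=|V|,\qquad g_2=\beta^{-1}|V\cdot\nu|.
\]
The first factor is $\left[\int_\Omega|\nabla u_f|^p\,dx+\beta\int_{\partial\Omega}|u_f|^p\,d\mathcal H^{N-1}\right]^{1/p}=J_f(\beta)^{1/p}$, while the key relation $\lambda|g_2|^{p'}=\beta^{1-p'}|V\cdot\nu|^{p'}=\beta^{-1/(p-1)}|V\cdot\nu|^{p'}$ makes the second factor $\Phi(V)^{1/p'}$, where $\Phi(V)$ denotes the functional on the right-hand side of \eqref{thomson}. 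Dividing through by $J_f(\beta)^{1/p}$ (the degenerate case $J_f(\beta)=0$ forces $u_f\equiv 0$ and hence $f\equiv 0$, trivializing \eqref{thomson}) and raising to the $p'$ power yields $J_f(\beta)\le\Phi(V)$ for every $V\in\mathcal V_f$, which combined with Step 1 gives \eqref{thomson} with the minimum attained at $V^{\ast}$.

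\textbf{Main obstacle.} The only delicate point I foresee is threading the various test-function conventions consistently. The identity \eqref{weakV} is stated for $\varphi\in C^1(\Omega)$, and a bit of care is needed both to verify that $V^{\ast}\in\mathcal V_f$ (so that the boundary term in \eqref{weakV} matches the Robin condition of \eqref{plaplaciano} with the correct sign) and to admit $\varphi=u_f\in C^{1,\alpha}(\overline\Omega)$ as a test function for an arbitrary $V\in\mathcal V_f\subset C(\overline\Omega)$. Once these sign conventions are unwound, all quantities become absolute values and the Hölder step is the only substantive analytic input.
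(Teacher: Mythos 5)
Your proposal is correct and follows essentially the same route as the paper: test the divergence identity with $\varphi=u_f$, apply the weighted H\"older inequality of Proposition \ref{holder} (your choice $\lambda=\beta$, $g_1=u_f$, $g_2=\beta^{-1}|V\cdot\nu|$ is equivalent to the paper's $\lambda=1$, $g_1=\beta^{1/p}u_f$, $g_2=\beta^{-1/p}|V\cdot\nu|$), and exhibit $V^{\ast}=|\nabla u_f|^{p-2}\nabla u_f$ as the minimizer. Your explicit handling of the degenerate case $J_f(\beta)=0$ and of the sign convention in \eqref{weakV} are minor refinements of details the paper passes over.
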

\begin{proof}
Let $u_{f}$ be the solution to the problem \eqref{plaplaciano}. Then by definition of $J_{f}$ and \eqref{weakV} with $\varphi=u_{f}$ it holds that
\begin{equation*}
J_{f}(\beta)=\int_{\Omega}fu_{f}dx=\int_{\Omega} V\cdot \nabla u_{f}\,dx+\int_{\de \Omega}u_f{V\cdot \nu}\;d\mathcal{H}^{N-1},
\end{equation*}
for any $V\in \mathcal V_{f}$. Hence, applying Proposition \ref{holder} with 
$f_1=\abs{\nabla u_f}$, $f_2=\abs{V}$, $g_1=\beta^{\frac 1p}u_f$ and $g_2=\beta^{-\frac1p}\abs{V \cdot \nu}$, we have
\begin{multline*}
J_f(\beta)\leq \left(\int_{\Omega}\abs{V}^{\frac{p}{p-1}}\;dx+\beta^{-\frac{1}{p-1}}\int_{\de\Omega}\abs{V\cdot \nu}^{\frac{p}{p-1}}\;d\mathcal{H}^{N-1}\right)^{\frac{p-1}{p}} \times \\ \times
\left(\int_{\Omega}\abs{\nabla u_f}^p\;dx+\beta\int_{\de \Omega}\abs{u_f}^p\;d\mathcal{H}^{N-1}\right)^{\frac{1}{p}},
\end{multline*}
hence
\[
J_f(\beta)\leq \int_{\Omega}\abs{V}^{\frac{p}{p-1}}\;dx+\beta^{-\frac{1}{p-1}}\int_{\de\Omega}\abs{V\cdot \nu}^{\frac{p}{p-1}}\;d\mathcal{H}^{N-1}.
\]

Finally, if $V=\abs{\nabla u_f}^{p-2}\nabla u_f$, then $V\in \mathcal V_{f}$ and the equality in the above inequality holds, hence the thesis follows.

\end{proof}

Now, we prove that the first Robin eigenvalue of the $p$-Laplace operator can be written in terms of a maximum problem involving the functional $J_f(\beta)$.
\begin{prop}
%\label{autoval}
It holds that
\begin{equation}
\label{autovalore}
\dfrac{1}{\lambda_p^{\frac{1}{p-1}}(\beta)}
= 
\max \left\{
\dfrac{J_f(\beta)}{\displaystyle\int_{\Omega}f^{p'}\;dx},\; f\in L^{\infty}(\Omega),\,f\ge 0, f\not\equiv 0 \right\}.
\end{equation}
\end{prop}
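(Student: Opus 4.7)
The plan is to prove the two inequalities $\le$ and $\ge$ separately. The upper bound will come from combining the Rayleigh quotient definition of $\lambda_p(\beta,\Omega)$ with H\"older's inequality applied to $\int_\Omega f u_f\,dx$; attainment will follow by plugging in $f=u_\beta^{p-1}$, where $u_\beta$ is a positive first Robin eigenfunction.

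\textbf{Upper bound for the supremum.} Let $f\in L^\infty(\Omega)$ with $f\ge 0$, $f\not\equiv 0$, and let $u_f\in W^{1,p}(\Omega)$ solve \eqref{plaplaciano}. Since $u_f\ge 0$ and $u_f\not\equiv 0$, I can use it as test function in the Rayleigh quotient defining $\lambda_p(\beta,\Omega)$, obtaining
\begin{equation*}
\lambda_p(\beta,\Omega)\int_\Omega u_f^{p}\,dx \le \int_\Omega|\nabla u_f|^{p}\,dx+\beta\int_{\de\Omega}u_f^{p}\,d\mathcal H^{N-1}= J_f(\beta).
\end{equation*}
On the other hand, by the standard H\"older inequality applied to $J_f(\beta)=\int_\Omega f u_f\,dx$,
\begin{equation*}
J_f(\beta)=\int_\Omega f u_f\,dx \le \left(\int_\Omega f^{p'}\,dx\right)^{1/p'}\left(\int_\Omega u_f^{p}\,dx\right)^{1/p}.
\end{equation*}
Raising to the power $p$ and combining with the previous display (using $p/p'=p-1$),
\begin{equation*}
J_f(\beta)^{p}\le \left(\int_\Omega f^{p'}\,dx\right)^{p-1}\int_\Omega u_f^{p}\,dx\le \left(\int_\Omega f^{p'}\,dx\right)^{p-1}\frac{J_f(\beta)}{\lambda_p(\beta,\Omega)},
\end{equation*}
which, after dividing by $J_f(\beta)>0$ and by $\left(\int_\Omega f^{p'}\,dx\right)^{p-1}$, rearranges to
\begin{equation*}
\frac{J_f(\beta)}{\int_\Omega f^{p'}\,dx}\le \frac{1}{\lambda_p(\beta,\Omega)^{1/(p-1)}}.
\end{equation*}

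\textbf{Attainment.} Let $u_\beta\in W^{1,p}(\Omega)$ be a positive first Robin eigenfunction, and set $f:=u_\beta^{p-1}\in L^\infty(\Omega)$. Since $-\Delta_p u_\beta=\lambda_p(\beta,\Omega)u_\beta^{p-1}$ with Robin condition on $\de\Omega$, by $p$-homogeneity the function $u_f:=\lambda_p(\beta,\Omega)^{-1/(p-1)}u_\beta$ solves \eqref{plaplaciano} (both the equation and the boundary condition scale correctly). Then
\begin{equation*}
J_f(\beta)=\int_\Omega f u_f\,dx=\lambda_p(\beta,\Omega)^{-1/(p-1)}\int_\Omega u_\beta^{p}\,dx,\qquad \int_\Omega f^{p'}\,dx=\int_\Omega u_\beta^{(p-1)p'}\,dx=\int_\Omega u_\beta^{p}\,dx,
\end{equation*}
so the ratio $J_f(\beta)/\int_\Omega f^{p'}dx$ equals $\lambda_p(\beta,\Omega)^{-1/(p-1)}$, matching the upper bound and proving \eqref{autovalore}.

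The argument is essentially a routine duality computation; the only delicate point is keeping track of the conjugate exponents $p,p'$ and the homogeneity scaling for the extremal $f$. No convergence or regularity issues arise beyond what is already established by the solvability and regularity statement for \eqref{plaplaciano}.
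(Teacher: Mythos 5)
Your proof is correct and follows essentially the same route as the paper: H\"older's inequality on $\int_\Omega f u_f\,dx$ combined with the Rayleigh quotient evaluated at $u_f$ for the upper bound, and attainment at an eigenfunction power (you take $f=u_\beta^{p-1}$, the paper takes $f=\lambda_p(\beta)u_\beta^{p-1}$, which is equivalent since the ratio is invariant under scaling of $f$). No issues.
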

%\begin{proof}
%Let $f\ge 0$, $f\in L^{p'}(\Omega)$ and $u_f$ be a solution to \eqref{eigenvalue_problem}. Then by the weak formulation and the H\"older inequality it holds that 
%\begin{equation}
%\dfrac{\displaystyle\int_{\Omega}\abs{\nabla u_{f}}^p\;dx+\beta\int_{\de\Omega}{u_{f}}^p\;d\mathcal{H}^{N-1}}{\displaystyle\int_{\Omega}{f}^{p'}\;dx}=\dfrac{\displaystyle\int_{\Omega}fu_f\;dx}{\displaystyle\int_{\Omega}{f}^{p'}\;dx} \le\dfrac{\displaystyle\left(\int_{\Omega}u_f^p\;dx\right)^{\frac{1}{p}}}{\displaystyle\left(\int_{\Omega}f^{p'}\;dx\right)^{\frac{1}{p}}}.
%\end{equation}
%
%Moreover by the definition of $J_f(\beta)$ and the H\"older inequality we get
%\begin{equation}
%\begin{aligned}
%    J_f(\beta)=\int_{\Omega}u_f f\;dx&\leq\left(\int_{\Omega}u_f^p\;dx\right)^{\frac{1}{p}}\left(\int_{\Omega}f^{p'}\;dx\right)^{\frac{1}{p'}}\\&\leq \dfrac{1}{{\lambda_p}^{\frac{1}{p}}(\beta)}J_f(\beta)^{\frac{1}{p}}\left(\int_{\Omega}f^{p'}\;dx\right)^{\frac{1}{p'}},
%    \end{aligned}
%\end{equation}
%then,
%$$J_f(\beta)^{1-\frac{1}{p}}\leq \dfrac{1}{\lambda_p^{\frac{1}{p}}(\beta)}\left(\int_{\Omega}f^{p'}\;dx\right)^{\frac{1}{p'}}$$
%that implies
%$$J_f(\beta)\leq \dfrac{1}{\lambda_p^{\frac{p'}{p}}(\beta)}\int_{\Omega}f^{p'}\;dx.$$
%and the claim follows.
%\end{proof}
\begin{proof}
Let $f\ge 0$, $f\in L^{\infty}(\Omega)$, $f\not\equiv 0$ and $u_f$ be a solution to \eqref{plaplaciano}. 
Then by the H\"older inequality it holds that 
\begin{equation}
\label{pass1}
\begin{aligned}
    J_f(\beta)=\int_{\Omega}u_f f\;dx&\le 
    \left(\int_{\Omega}u_f^p\;dx\right)^{\frac{1}{p}}\left(\int_{\Omega}f^{p'}\;dx\right)^{\frac{1}{p'}}
    \end{aligned}
\end{equation}
On the other hand, using $u_{f}$ as test function in the variational formulation \eqref{variationalaut} of $\lambda_{p}(\beta,\Omega)$ it holds that
\begin{equation}
\label{pass2}
\int_{\Omega} u_{f}^{p} dx \le \dfrac{\ds J_{f}(\beta,\Omega)}{\ds \lambda_{p}(\beta)}
\end{equation}
%\begin{equation}
%\dfrac{\displaystyle\int_{\Omega}\abs{\nabla u_{f}}^p\;dx+\beta\int_{\de\Omega}{u_{f}}^p\;d\mathcal{H}^{N-1}}{\displaystyle\int_{\Omega}{f}^{p'}\;dx}=\dfrac{\displaystyle\int_{\Omega}fu_f\;dx}{\displaystyle\int_{\Omega}{f}^{p'}\;dx} \le\dfrac{\displaystyle\left(\int_{\Omega}u_f^p\;dx\right)^{\frac{1}{p}}}{\displaystyle\left(\int_{\Omega}f^{p'}\;dx\right)^{\frac{1}{p}}}.
%\end{equation}
%Moreover by the definition of $J_f(\beta)$ and the H\"older inequality we get
Using \eqref{pass2} in \eqref{pass1} and rearranging the terms we get
\begin{equation}
\label{pass3}
\dfrac{J_f(\beta)}{\displaystyle\int_{\Omega}f^{p'}\;dx} \le \dfrac{1}{\lambda_p^{\frac{1}{p-1}}(\beta,\Omega)}.
\end{equation} 
By observing that if $f=\lambda_{p}(\beta)u_\beta^{p-1}$, where $u_\beta$ is a first positive eigenfunction of \eqref{eigenvalue_problem}, the equality in \eqref{pass3} holds and the thesis follows.
\end{proof}
A key tool related to $J_f(\beta)$ is given by the following convexity property:
\begin{lem}
Let $\Omega$ be a bounded open set in $\mathbb R^N$, with $C^{1,\gamma}$ boundary. Then for any $f\in L^{\infty}(\Omega)$, $f\ge 0$, the function $J_f$ satisfies
\begin{equation}
\label{convexity}
J_f(\beta)\leq J_f(\alpha)+\left(\dfrac{1}{\beta^{\frac{1}{p-1}}}-\dfrac{1}{\alpha^{\frac{1}{p-1}}}\right)H(\alpha),
\end{equation}
for any $\alpha,\beta>0$ and for some function $H(\alpha)\ge 0$.
\end{lem}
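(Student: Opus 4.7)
The plan is to deduce the convexity-type inequality directly from the Thompson-type minimum principle \eqref{thomson}, using the minimizer at parameter $\alpha$ as a competitor at parameter $\beta$.

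First I fix $\alpha>0$ and set $u_\alpha\in W^{1,p}(\Omega)$ to be the solution of \eqref{plaplaciano} with Robin parameter $\alpha$. Define the vector field $V_\alpha := |\nabla u_\alpha|^{p-2}\nabla u_\alpha$. As shown at the end of the proof of \eqref{thomson}, $V_\alpha\in\mathcal V_f$ (it realizes $-\dive V_\alpha=f$ in the weak sense \eqref{weakV} because of the Robin boundary condition), and it attains the minimum in the Thompson principle at level $\alpha$, namely
\[
J_f(\alpha)=\int_\Omega |V_\alpha|^{p'}\,dx+\alpha^{-\frac{1}{p-1}}\int_{\de\Omega}|V_\alpha\cdot\nu|^{p'}\,d\mathcal H^{N-1}.
\]

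Next I plug the same $V_\alpha$ into the Thompson principle for $J_f(\beta)$ as a (non-optimal) competitor. Since $V_\alpha\in\mathcal V_f$ regardless of the parameter, \eqref{thomson} yields
\[
J_f(\beta)\le \int_\Omega |V_\alpha|^{p'}\,dx+\beta^{-\frac{1}{p-1}}\int_{\de\Omega}|V_\alpha\cdot\nu|^{p'}\,d\mathcal H^{N-1}.
\]
Subtracting the identity above from this inequality gives exactly
\[
J_f(\beta)\le J_f(\alpha)+\left(\dfrac{1}{\beta^{\frac{1}{p-1}}}-\dfrac{1}{\alpha^{\frac{1}{p-1}}}\right)H(\alpha),
\]
with
\[
H(\alpha):=\int_{\de\Omega}|V_\alpha\cdot\nu|^{p'}\,d\mathcal H^{N-1}\ge 0.
\]
This is the desired \eqref{convexity}. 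As a remark, the Robin boundary condition $|\nabla u_\alpha|^{p-2}\partial_\nu u_\alpha=-\alpha|u_\alpha|^{p-2}u_\alpha$ lets us rewrite $H(\alpha)=\alpha^{p'}\int_{\de\Omega}|u_\alpha|^p\,d\mathcal H^{N-1}$ (using $(p-1)p'=p$), a form which will likely be useful in the applications that follow.

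There is no real obstacle here: the argument is a one-line comparison once the Thompson characterization \eqref{thomson} is available, and the nonnegativity of $H(\alpha)$ is automatic. The only thing to verify carefully is that $V_\alpha$ is admissible in the sense of \eqref{weakV} (i.e.\ lies in $\mathcal V_f$); this is established by the $C^{1,\alpha}(\overline\Omega)$ regularity of $u_\alpha$ (from \cite{lieberman}, since $\de\Omega$ is $C^{1,\gamma}$) together with the weak formulation \eqref{weak_formulation} of \eqref{plaplaciano}, which after integration by parts is exactly \eqref{weakV}.
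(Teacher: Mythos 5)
Your proposal is correct and follows essentially the same route as the paper: both insert the optimal field $V_\alpha=|\nabla u_{f,\alpha}|^{p-2}\nabla u_{f,\alpha}$ for parameter $\alpha$ as a competitor in the Thompson principle \eqref{thomson} at parameter $\beta$, and take $H(\alpha)=\int_{\de\Omega}|V_\alpha\cdot\nu|^{p'}\,d\mathcal H^{N-1}$. Your added observation that $V_\alpha$ exactly attains $J_f(\alpha)$ (the paper only writes an inequality there) and the rewriting $H(\alpha)=\alpha^{p'}\int_{\de\Omega}|u_{f,\alpha}|^p\,d\mathcal H^{N-1}$ are both accurate.
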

\begin{proof}
To prove \eqref{convexity}, let $f$ be fixed and let us denote with $u_{f,\alpha}\in C^{1,\gamma}(\bar\Omega)$ be the solution to \eqref{plaplaciano} where the coefficient in boundary condition is given by $\alpha$.  By the Thompson principle \eqref{thomson}, if we choose $V=\abs{\nabla u_{f,\alpha}}^{p-2}\nabla u_{f,\alpha}$, we have that
\begin{align*}
    J_f(\beta)&\leq \int_{\Omega}\abs{\nabla u_{f,\alpha}}^{p}\;dx+\beta^{-\frac{1}{p-1}}\int_{\de \Omega}\abs{\abs{\nabla u_{f,\alpha}}^{p-2}\nabla u_{f,\alpha}\cdot \nu}^{\frac{p}{p-1}}\;d\mathcal{H}^{N-1}\\
    &= \int_{\Omega}\abs{\nabla u_{f,\alpha}}^{p}+\alpha^{-\frac{1}{p-1}}\int_{\de \Omega}\abs{\abs{\nabla u_{f,\alpha}}^{p-2}\nabla u_{f,\alpha}\cdot \nu}^{\frac{p}{p-1}}d\mathcal{H}^{N-1}\\& +\left(\dfrac{1}{\beta^{\frac{1}{p-1}}}-\dfrac{1}{\alpha^{\frac{1}{p-1}}}\right)\int_{\de \Omega}\abs{\abs{\nabla u_{f,\alpha}}^{p-1}\nabla u_{f,\alpha}\cdot\nu}^{\frac{p}{p-1}}d\mathcal{H}^{N-1}\\
    &\leq J_f(\alpha)+\left(\dfrac{1}{\beta^{\frac{1}{p-1}}}-\dfrac{1}{\alpha^{\frac{1}{p-1}}}\right)\int_{\de \Omega}\abs{\abs{\nabla u_{f,\alpha}}^{p-2}\nabla u_{f,\alpha}\cdot\nu}^{\frac{p}{p-1}}d\mathcal{H}^{N-1}.
\end{align*}

Denoting by $H(\alpha)$ the function $\int_{\de \Omega}\abs{\abs{\nabla u_{f,\alpha}}^{p-2}\nabla u_{f,\alpha}\cdot \nu}^{\frac{p}{p-1}}d\mathcal{H}^{N-1}$, the thesis follows.
\end{proof}

\section{Limit properties when
\texorpdfstring{$\beta\to 0$}{TEXT}
or
\texorpdfstring{$\beta\to +\infty$}{TEXT}
}
\label{limiti}
The following proposition is well-known. For the sake of completeness, we give also a proof.
\begin{prop}
    Let $\Omega$ be a bounded, Lipschitz open set in $\R^N$. Then
    \begin{equation}
    \label{limbeta}
            \lim_{\beta\to+\infty} \lambda_p(\beta,\Omega)=\lambda_p^D(\Omega).   
    \end{equation}
\end{prop}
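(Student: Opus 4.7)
The plan is to use the variational characterization \eqref{variationalaut} to get the easy upper bound $\lambda_p(\beta,\Omega)\le \lambda_p^D(\Omega)$, and then use a compactness argument on a sequence of normalized Robin eigenfunctions to get the reverse inequality in the limit.

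First I would observe that $\beta\mapsto \lambda_p(\beta,\Omega)$ is monotone non-decreasing, since increasing $\beta$ only increases the Rayleigh quotient while leaving the admissible class $W^{1,p}(\Omega)\setminus\{0\}$ unchanged; in particular the limit as $\beta\to+\infty$ exists in $(0,+\infty]$. For the upper bound, I would use that $W_0^{1,p}(\Omega)\subset W^{1,p}(\Omega)$ and that any $\varphi\in W_0^{1,p}(\Omega)$ has vanishing trace on $\partial\Omega$, so the Robin Rayleigh quotient evaluated on $\varphi$ reduces to the Dirichlet one; taking the infimum over $W_0^{1,p}(\Omega)\setminus\{0\}$ gives $\lambda_p(\beta,\Omega)\le \lambda_p^D(\Omega)$ for all $\beta>0$, whence $\limsup_{\beta\to+\infty}\lambda_p(\beta,\Omega)\le \lambda_p^D(\Omega)$.

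For the reverse inequality, I would fix a sequence $\beta_n\to+\infty$ and let $u_n\in W^{1,p}(\Omega)$ be a first (positive) eigenfunction associated with $\lambda_p(\beta_n,\Omega)$, normalized so that $\int_\Omega u_n^p\,dx=1$. Then
\[
\int_\Omega |\nabla u_n|^p\,dx+\beta_n\int_{\de\Omega}|u_n|^p\,d\mathcal H^{N-1}=\lambda_p(\beta_n,\Omega)\le \lambda_p^D(\Omega),
\]
so $\{u_n\}$ is bounded in $W^{1,p}(\Omega)$, and moreover $\int_{\de\Omega}|u_n|^p\,d\mathcal H^{N-1}\le \lambda_p^D(\Omega)/\beta_n\to 0$. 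By the Rellich--Kondrachov theorem and the compactness of the trace embedding, up to a subsequence $u_n\rightharpoonup u^*$ weakly in $W^{1,p}(\Omega)$, strongly in $L^p(\Omega)$, and the traces converge strongly in $L^p(\de\Omega)$; hence $\int_\Omega |u^*|^p\,dx=1$ and $\int_{\de\Omega}|u^*|^p\,d\mathcal H^{N-1}=0$. The latter forces the trace of $u^*$ to vanish on $\de\Omega$, so $u^*\in W_0^{1,p}(\Omega)\setminus\{0\}$.

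Finally I would use the weak lower semicontinuity of $\varphi\mapsto \int_\Omega|\nabla \varphi|^p\,dx$ together with the variational characterization of $\lambda_p^D(\Omega)$:
\[
\lambda_p^D(\Omega)\le \int_\Omega|\nabla u^*|^p\,dx\le \liminf_{n\to+\infty}\int_\Omega|\nabla u_n|^p\,dx\le \liminf_{n\to+\infty}\lambda_p(\beta_n,\Omega).
\]
Combining this with the upper bound yields $\lim_{\beta\to+\infty}\lambda_p(\beta,\Omega)=\lambda_p^D(\Omega)$. The only mildly delicate step is ensuring $u^*\not\equiv 0$ and $u^*\in W_0^{1,p}(\Omega)$; both follow from the chosen $L^p$-normalization and from the vanishing of the limiting trace, so no serious obstacle is expected.
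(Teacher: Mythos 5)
Your proposal is correct and follows essentially the same route as the paper: normalize the Robin eigenfunctions in $L^p(\Omega)$, use the bound $\lambda_p(\beta,\Omega)\le\lambda_p^D(\Omega)$ to get boundedness in $W^{1,p}(\Omega)$ and vanishing of the boundary traces, identify the weak limit as an admissible Dirichlet competitor, and conclude by lower semicontinuity. The added monotonicity remark is harmless but not needed.
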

\begin{proof}
    If $u_\beta$ is a first positive eigenfunction for $\lambda_p(\beta,\Omega)$ such that $\|u_\beta\|_{L^p(\Omega)}=1$, it holds that
    \[
    \lambda_p(\beta,\Omega)= \int_\Omega|\nabla u_\beta|^pdx+\beta\int_\Omega u_\beta^pd\mathcal H^{N-1} \le \lambda^D(\Omega),\quad \forall \beta\in\R.
    \]
    Then $u_\beta$ is bounded in $W^{1,p}(\Omega)$; it weakly converges in $W^{1,p}(\Omega)$ and strongly in $L^p(\Omega)$ and in $L^p(\de\Omega)$ to a function $u_\infty\in W_0^{1,p}(\Omega)$, and $\|u_\infty\|_{L^p(\Omega)}=1$. By  the definition of $\lambda^D(\Omega)$ and semicontinuity, it holds that
    \begin{multline*}
        \lambda^D(\Omega)\le \int_\Omega |\nabla u_\infty|^pdx \le \liminf_{\beta\to +\infty} \left[\int_\Omega|\nabla u_\beta|^pdx+\beta\int_\Omega u_\beta^pd\mathcal H^{N-1} \right]\le \\ \le \limsup_{\beta\to +\infty} \left[ \int_\Omega|\nabla u_\beta|^pdx+\beta\int_\Omega u_\beta^pd\mathcal H^{N-1} \right]\le \lambda^D(\Omega),
    \end{multline*}
    and the proof is complete.
\end{proof}
\begin{prop}
\label{proplimbeta}
Let $\Omega$ be a bounded, Lipschitz open set in $\R^N$, and let $u_{f,\beta}\in W^{1,p}(\Omega)$ be a solution to the problem \eqref{plaplaciano}, with $f\in L^{p'}(\Omega)$. Then there exists a function $u_{f,\infty}\in W^{1,p}_0(\Omega)$ that satisfies
\begin{equation}
\label{dirichletf}
\begin{cases}
-\Delta_{p} u_{f,\infty}= f&\text{in }\Omega,\\
u_{f,\infty}=0&\text{on }\de\Omega
\end{cases}
\end{equation}
and it holds that
\begin{equation}
    \label{dirichletf2}
\lim_{\beta\to +\infty} J_f(\beta)= 
\lim_{\beta\to +\infty} \int_\Omega |\nabla u_{f,\beta}|^pdx= \int_\Omega |\nabla u_{f,\infty}|^pdx.
\end{equation} 
\end{prop}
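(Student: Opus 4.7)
The plan is to carry out a $\Gamma$-convergence style argument: extract a weak $W^{1,p}$ limit of $\{u_{f,\beta}\}$, show it belongs to $W^{1,p}_0(\Omega)$, identify it with the unique Dirichlet solution $u_{f,\infty}$, and finally upgrade this identification to the convergence of energies.

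First, I would test the weak formulation \eqref{weak_formulation} of \eqref{plaplaciano} against $u_{f,\beta}$ itself to obtain the energy identity
\[
J_f(\beta) = \int_\Omega |\nabla u_{f,\beta}|^p \, dx + \beta \int_{\de\Omega} |u_{f,\beta}|^p \, d\mathcal H^{N-1} = \int_\Omega f u_{f,\beta} \, dx.
\]
Using H\"older's inequality with $f \in L^{p'}(\Omega)$ on the right-hand side and a standard Robin--Poincar\'e inequality $\int_\Omega |u|^p\,dx \le C_\Omega\bigl(\int_\Omega|\nabla u|^p\,dx + \int_{\de\Omega}|u|^p\,d\mathcal H^{N-1}\bigr)$, which controls $\|u_{f,\beta}\|_{L^p(\Omega)}^p$ by $C_\Omega J_f(\beta)$ as soon as $\beta\ge 1$, the identity can be closed to yield a uniform bound $J_f(\beta)\le C$. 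Hence $\{u_{f,\beta}\}$ is bounded in $W^{1,p}(\Omega)$ and $\int_{\de\Omega}|u_{f,\beta}|^p\,d\mathcal H^{N-1}\le C/\beta\to 0$. Up to a subsequence, $u_{f,\beta}\rightharpoonup u_\infty$ weakly in $W^{1,p}(\Omega)$ and, by compactness of the trace embedding, strongly in $L^p(\Omega)$ and $L^p(\de\Omega)$, so the vanishing of the boundary norm forces $u_\infty\in W_0^{1,p}(\Omega)$.

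To identify $u_\infty$ with $u_{f,\infty}$, I would use the variational characterization: $u_{f,\beta}$ minimizes
\[
F_\beta(v) = \frac{1}{p}\int_\Omega |\nabla v|^p \, dx + \frac{\beta}{p}\int_{\de\Omega}|v|^p \, d\mathcal H^{N-1} - \int_\Omega f v \, dx
\]
over $W^{1,p}(\Omega)$, whereas $u_{f,\infty}$ minimizes the analogue $F_\infty$ (without boundary term) over $W_0^{1,p}(\Omega)$. Since $u_{f,\infty}$ has zero trace, testing the minimality of $u_{f,\beta}$ against it yields $F_\beta(u_{f,\beta}) \le F_\infty(u_{f,\infty})$. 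Passing to the $\liminf$, using weak lower semicontinuity of $\|\nabla\cdot\|_{L^p}$, strong $L^p$ convergence to handle the linear term $\int f u_{f,\beta}\,dx$, and nonnegativity of the boundary term, I obtain
\[
F_\infty(u_\infty) \le \liminf_{\beta\to\infty} F_\beta(u_{f,\beta}) \le F_\infty(u_{f,\infty}),
\]
and strict convexity of $F_\infty$ on $W_0^{1,p}(\Omega)$ forces $u_\infty=u_{f,\infty}$; this identification of the limit removes the need to argue along subsequences.

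Combining the previous liminf bound with the matching upper bound $\limsup F_\beta(u_{f,\beta}) \le F_\infty(u_{f,\infty})$ and the strong convergence $\int f u_{f,\beta}\,dx\to \int f u_{f,\infty}\,dx$ shows that both summands in $J_f(\beta)$ converge separately: $\int_\Omega|\nabla u_{f,\beta}|^p\,dx \to \int_\Omega |\nabla u_{f,\infty}|^p\,dx$ and $\beta\int_{\de\Omega}|u_{f,\beta}|^p\,d\mathcal H^{N-1}\to 0$, which is exactly \eqref{dirichletf2}. The main obstacle I anticipate is passing to the limit in the nonlinear energy $\int|\nabla u_{f,\beta}|^p\,dx$ under merely weak $W^{1,p}$ convergence; the $\Gamma$-convergence style argument circumvents this by splitting the passage into a lower semicontinuity inequality and a matching upper bound obtained by using $u_{f,\infty}$ as an explicit competitor, rather than attempting to pass to the limit directly in the $p$-Laplacian term.
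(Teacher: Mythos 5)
Your proposal is correct, and the compactness part coincides with the paper's: both test \eqref{weak_formulation} with $u_{f,\beta}$, derive a uniform bound on $J_f(\beta)$ (the paper closes the estimate using $\lambda_p(\beta,\Omega)\ge\lambda_p(1,\Omega)>0$ via \eqref{limbeta}, which is exactly the optimal constant in your Robin--Poincar\'e inequality), and conclude that the boundary term is $O(1/\beta)$ so the weak limit lies in $W_0^{1,p}(\Omega)$. Where you genuinely diverge is in the identification of the limit and the energy convergence. The paper passes to the limit directly in the weak Euler--Lagrange equation \eqref{weak_formulation} to see that the limit solves the Dirichlet problem, and then reads off $\lim J_f(\beta)=\lim\int_\Omega fu_{f,\beta}\,dx=\int_\Omega fu_{f,\infty}\,dx=\int_\Omega|\nabla u_{f,\infty}|^p\,dx$; this is shorter, but the phrase ``we can pass to the limit in \eqref{weak_formulation}'' conceals the Minty--Browder monotonicity step needed to handle $|\nabla u_{f,\beta}|^{p-2}\nabla u_{f,\beta}$ under mere weak convergence of gradients. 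Your variational route --- comparing $F_\beta(u_{f,\beta})\le F_\beta(u_{f,\infty})=F_\infty(u_{f,\infty})$, using weak lower semicontinuity for the liminf, and invoking strict convexity to identify the limit --- sidesteps that issue entirely and, as a bonus, makes explicit the squeeze argument showing that the gradient term and the boundary term in $J_f(\beta)$ converge \emph{separately}, which is needed for the middle equality in \eqref{dirichletf2} and is left implicit in the paper. The only ingredients you should state are that $u_{f,\beta}$ is indeed the unique minimizer of $F_\beta$ (which is how the paper's ``standard existence results'' for \eqref{plaplaciano} are obtained) and the uniqueness of the minimizer of $F_\infty$ on $W_0^{1,p}(\Omega)$; both are standard for $p>1$.
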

\begin{proof}
    Using $u_{f,\beta}$ as test function in \eqref{weak_formulation}, by definition of $\lambda_p(\beta,\Omega)$ and the H\"older inequality, we get
    \begin{multline}
    \label{passlimite}
     \lambda_p(\beta,\Omega)\int_\Omega|u_{f,\beta}|^p\,dx\le   \int_\Omega |\nabla u_{f,\beta}|^p \,dx+\beta\int_{\de\Omega} |u_{f,\beta}|^{p}\,d\mathcal H^{N-1} = \int_\Omega fu_{f,\beta}\,dx \le \\ \le \|f\|_{L^{p'}(\Omega)}\|u_{f,\beta}\|_{L^p(\Omega)}.
    \end{multline}
        Then, recalling \eqref{limbeta}, \eqref{passlimite} implies
    \[
    \int_\Omega|u_{f,\beta}|^pdx\le C \quad\text{as }\beta\to +\infty,
    \]
 and using again \eqref{passlimite} it holds that $u_{f,\beta}$ is bounded in $W^{1,p}(\Omega)$ as $\beta\to +\infty$. Hence $u_{f,\beta}$ weakly converges in $W^{1,p}(\Omega)$ and strongly converges in $L^p(\Omega)$ and $L^p(\de\Omega)$ to a function $u_{f,\infty}\in W^{1,p}(\Omega)$. Moreover, by \eqref{passlimite} again it holds that $u_{f,\infty}\in W^{1,p}_0(\Omega)$. Finally, we can pass to the limit in \eqref{weak_formulation} in order to get that $u_{f,\infty}$ solves \eqref{dirichletf},
 and 
 \begin{multline*}
   \lim_{\beta\to+\infty} J_f(\beta)= \lim_{\beta\to +\infty} \left[\int_\Omega |\nabla u_{f,\beta}|^pdx+\beta \int_{\de\Omega}|u_{f,\beta}|^p d\mathcal H^{N-1}\right] \\ =
  \lim_{\beta\to+\infty} \int_\Omega f u_{f,\beta}dx= \int_\Omega f u_{f,\infty}dx=    \int_\Omega |\nabla u_{f,\infty}|^pdx.
 \end{multline*}
Hence \eqref{dirichletf2} is in force.
\end{proof}
Now, we show the behavior of $\lambda_p(\beta,\Omega)$ as $\beta\to0^+$.
\begin{thm} 
\label{lambdasubetathm}
Let $\Omega$ be a bounded Lipschitz domain in $\R^{N}$ and $\lambda_p(\beta,\Omega)$ be the first Robin eigenvalue, with $\beta>0$. Then
    \begin{equation}
    \label{lambdasubeta}
      \lim_{\beta\to 0^{+}}  \dfrac{\lambda_p(\beta,\Omega)}{\beta}= \dfrac{P(\Omega)}{\abs{\Omega}}.
    \end{equation}
\end{thm}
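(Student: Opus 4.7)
My plan is to establish the limit by a matching upper bound and lower bound on $\lambda_p(\beta,\Omega)/\beta$.

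For the upper bound, I would use the variational characterization \eqref{variationalaut} with the constant test function $w\equiv 1$. Since $|\nabla 1|=0$, this immediately gives
\[
\lambda_p(\beta,\Omega)\le \frac{\beta\,P(\Omega)}{|\Omega|},
\]
so $\limsup_{\beta\to 0^+}\lambda_p(\beta,\Omega)/\beta\le P(\Omega)/|\Omega|$.

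For the lower bound, I would take a positive first eigenfunction $u_\beta$ normalized by $\|u_\beta\|_{L^p(\Omega)}=1$. Then
\[
\frac{\lambda_p(\beta,\Omega)}{\beta}=\frac{1}{\beta}\int_\Omega|\nabla u_\beta|^p\,dx+\int_{\de\Omega}u_\beta^p\,d\mathcal H^{N-1}.
\]
Combining this identity with the upper bound above yields $\int_\Omega|\nabla u_\beta|^p\,dx\le P(\Omega)/|\Omega|\cdot \beta\to 0$, which shows that $\{u_\beta\}$ is bounded in $W^{1,p}(\Omega)$ and that the gradient vanishes in $L^p(\Omega)$ as $\beta\to 0^+$. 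Therefore, up to subsequences, $u_\beta$ converges weakly in $W^{1,p}(\Omega)$ and strongly in $L^p(\Omega)$ to some constant $c$; the normalization $\|u_\beta\|_{L^p(\Omega)}=1$ forces $c=|\Omega|^{-1/p}$, and uniqueness of the limit ensures convergence of the whole family.

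The key remaining step, which I expect to be the main (though not severe) obstacle, is to pass the convergence to the boundary integral. Since $u_\beta$ is bounded in $W^{1,p}(\Omega)$, the compactness of the trace operator recalled earlier (into $L^q(\de\Omega)$ for $q<(N-1)p/(N-p)$, extended by standard embeddings when $p\ge N$) gives $u_\beta\to |\Omega|^{-1/p}$ strongly in $L^p(\de\Omega)$, so
\[
\int_{\de\Omega}u_\beta^p\,d\mathcal H^{N-1}\longrightarrow \frac{P(\Omega)}{|\Omega|}.
\]
Dropping the nonnegative term $\beta^{-1}\int_\Omega|\nabla u_\beta|^p\,dx$ in the identity above, we then obtain
\[
\liminf_{\beta\to 0^+}\frac{\lambda_p(\beta,\Omega)}{\beta}\ge \lim_{\beta\to 0^+}\int_{\de\Omega}u_\beta^p\,d\mathcal H^{N-1}=\frac{P(\Omega)}{|\Omega|},
\]
which, combined with the upper bound, proves \eqref{lambdasubeta}.
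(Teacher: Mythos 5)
Your proof is correct, and the core of it coincides with the paper's: both arguments normalize $\|u_\beta\|_{L^p(\Omega)}=1$, observe that $\int_\Omega|\nabla u_\beta|^p\,dx\le\lambda_p(\beta,\Omega)\to 0$, and use weak $W^{1,p}$-compactness together with the compact trace embedding to conclude that $u_\beta$ converges to the constant $|\Omega|^{-1/p}$ strongly in $L^p(\Omega)$ and $L^p(\partial\Omega)$. Where you diverge is in how the limit of the quotient $\lambda_p(\beta,\Omega)/\beta$ is then extracted: the paper tests the Euler--Lagrange equation with $\varphi\equiv 1$ to get the exact identity $\lambda_p(\beta,\Omega)/\beta=\int_{\partial\Omega}u_\beta^{p-1}\,d\mathcal H^{N-1}\big/\int_\Omega u_\beta^{p-1}\,dx$ and passes to the limit in numerator and denominator, whereas you squeeze the quotient between the Rayleigh upper bound $P(\Omega)/|\Omega|$ (from $w\equiv1$) and the boundary term $\int_{\partial\Omega}u_\beta^p\,d\mathcal H^{N-1}$ obtained by discarding the nonnegative gradient term. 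Your route is marginally more self-contained (it never needs convergence of the $(p-1)$-power integrals, which for $1<p<2$ requires a small extra remark), while the paper's identity is sharper in that it expresses the quotient exactly for every $\beta>0$; both are equally rigorous, and your handling of the trace compactness and of the passage from subsequences to the full family is sound.
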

\begin{proof}
    Let $u_\beta$  be a first positive eigenfunction of $\lambda_{p}(\beta,\Omega)$. By using as test function $\varphi\equiv 1$ in the equation solved by $u_{\beta}$, we have that
    \[\lambda_p(\beta,\Omega)\int_{\Omega}u_{\beta}^{p-1}dx=\beta\int_{\de \Omega}u_{\beta}^{p-1}dx.\]
    For $\beta\rightarrow 0^+$, $\lambda_p(\beta,\Omega)\rightarrow 0.$ We claim that $u_{\beta}\rightarrow c$, where $c$ is a constant, for $\beta \rightarrow 0^+$. 
    To prove the claim, let us fix $||u_\beta||_{L^p(\Omega)}=1$. Hence
    \[
    \int_{\Omega}\abs{\nabla u_{\beta}}^p dx\le \int_{\Omega}\abs{\nabla u_{\beta}}^pdx +\beta\int_{\de\Omega}{u_{\beta}}^p d\mathcal H^{N-1}=\lambda_p(\beta,\Omega),
    \]
 then $||\nabla u_\beta||_{L^p(\Omega)}$ is bounded, and $u_\beta$ is bounded in $W^{1,p}(\Omega)$. By compactness, there exists a subsequence, still denoted by $u_\beta$, and a function $u_{0}\in W^{1,p}(\Omega)$ such that
\[
 \begin{array}{ll}
 \nabla u_{\beta}\rightharpoonup \nabla u_{0} & \text{weakly in }L^{p}(\Omega),\\
  u_{\beta}\rightarrow u_{0} & \text{strongly in }L^{q}(\Omega),\quad 1\le q\le p\\
    u_{\beta}\rightarrow u_{0} & \text{strongly in }L^{p}(\de\Omega),\quad 1\le q\le p.
 \end{array}
 \] 
  Hence
    \begin{multline*}
        \int_{\Omega}\abs{\nabla u_{0}}^pdx\le \liminf_{\beta\to 0^{+}}\left(\int_{\Omega}\abs{\nabla u_{\beta}}^pdx+\int_{\de \Omega}\beta \abs{u_{\beta}}^pd\mathcal H^{N-1}\right)\\=\liminf_{\beta \to 0^{+}}\lambda_p(\beta,\Omega)=0.    
    \end{multline*}    
    Then, $||\nabla u_{0}||_{L^p(\Omega)}= 0$ and $u_{0}=c$ is constant in $\overline\Omega$. Using in particular that
    \[
    \dfrac{\lambda_p(\beta,\Omega)}{\beta}=\dfrac{\ds\int_{\de \Omega}u_{\beta}^{p-1}d\mathcal H^{N-1}}{\ds\int_{\Omega}u_{\beta}^{p-1}dx},
    \]
    passing to the limit we have that
    \[
\lim_{\beta\to 0^+}    \dfrac{\lambda_p(\beta,\Omega)}{\beta}= \dfrac{P(\Omega)}{\abs{\Omega}},
    \]
    that is the thesis.
\end{proof}

Now, we show the behavior of the solutions of \eqref{plaplaciano} as $\beta\to 0^{+}$.
\begin{prop}
\label{limit_prob}
Let $\Omega$ be a bounded, Lipschitz domain, $0\le f\in L^{\infty}(\Omega)$ and $\beta>0$, and consider $u_{f,\beta}\in W^{1,p}(\Omega)$ a solution to \eqref{plaplaciano}. Then the sequence $\psi_\beta = u_{f,\beta}-m_\beta$, with $m_\beta=\frac{1}{P(\Omega)}\int_{\de\Omega} u_{f,\beta}d\mathcal H^{N-1}$, weakly converges in $W^{1,p}(\Omega)$, as $\beta$ goes to $0$, to a function $v\in W^{1,p}(\Omega)$ which satisfies
\begin{equation}
\label{limitebeta0}
\begin{cases}
    -\Delta_p v=f &\textrm{in}\;\Omega\\[.2cm]
    \abs{\nabla v}^{p-2}\dfrac{\de v}{\de \nu}=-\dfrac{1}{P(\Omega)}\displaystyle\int_{\Omega}f\:dx &\textrm{on}\;\de \Omega,\\[.2cm]
 \ds\int_{\de\Omega} v d\mathcal H^{N-1}=0.
\end{cases}
\end{equation}
Moreover, 
\[ 
\lim_{\beta \rightarrow 0^+} \int_{\Omega}\abs{\nabla u_{f,\beta}}^p\;dx=\int_{\Omega}\abs{\nabla v}^pdx.
\]
\end{prop}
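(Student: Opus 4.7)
Proceed in four steps: (i) derive a uniform $W^{1,p}(\Omega)$ bound on $\psi_\beta$; (ii) establish the limit behavior of the boundary integral $\beta\int_{\partial\Omega}u_{f,\beta}^{p-1}\varphi\,d\mathcal H^{N-1}$; (iii) identify the weak limit $v$ via a Minty-type monotonicity argument; (iv) upgrade to strong convergence of the gradients. For Step (i), using $\psi_\beta$ as a test function in \eqref{weak_formulation} and exploiting $\nabla u_{f,\beta}=\nabla\psi_\beta$,
\begin{equation*}
\int_\Omega|\nabla\psi_\beta|^p\,dx+\beta\int_{\partial\Omega}u_{f,\beta}^{p-1}\psi_\beta\,d\mathcal H^{N-1}=\int_\Omega f\psi_\beta\,dx.
\end{equation*}
The convexity of $t\mapsto t^p$ gives the pointwise bound $u_{f,\beta}^{p-1}(u_{f,\beta}-m_\beta)\ge \tfrac{1}{p}(u_{f,\beta}^p-m_\beta^p)$, whose integral on $\partial\Omega$ is nonnegative thanks to Jensen's inequality $m_\beta^p P(\Omega)\le \int_{\partial\Omega}u_{f,\beta}^p\,d\mathcal H^{N-1}$. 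Combined with $\int_{\partial\Omega}\psi_\beta\,d\mathcal H^{N-1}=0$ and the Poincaré-Wirtinger inequality \eqref{pwirt}, one gets $\|\nabla\psi_\beta\|_p\le C$; hence, up to a subsequence, $\psi_\beta\rightharpoonup v$ weakly in $W^{1,p}(\Omega)$ and strongly in $L^p(\Omega)$ and in $L^p(\partial\Omega)$, with $\int_{\partial\Omega}v\,d\mathcal H^{N-1}=0$.

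\textbf{Boundary asymptotics.} Testing \eqref{weak_formulation} with $\varphi\equiv 1$ gives the compatibility identity $\beta\int_{\partial\Omega}u_{f,\beta}^{p-1}\,d\mathcal H^{N-1}=\int_\Omega f\,dx$. Since $\psi_\beta$ is bounded in $L^p(\partial\Omega)$, one deduces $m_\beta\to+\infty$ and $\beta m_\beta^{p-1}\To \chi:=\tfrac{1}{P(\Omega)}\int_\Omega f\,dx$. The elementary estimate
\begin{equation*}
\bigl|(m_\beta+\psi_\beta)^{p-1}-m_\beta^{p-1}\bigr|\le C\bigl(m_\beta^{p-2}|\psi_\beta|+|\psi_\beta|^{p-1}\bigr),\qquad m_\beta\ge 1,
\end{equation*}
combined with $\beta m_\beta^{p-2}\to 0$, then yields, for every $\varphi\in W^{1,p}(\Omega)$,
\begin{equation*}
\beta\int_{\partial\Omega}u_{f,\beta}^{p-1}\varphi\,d\mathcal H^{N-1}\To \chi\int_{\partial\Omega}\varphi\,d\mathcal H^{N-1}.
\end{equation*}
In particular, applying this with $\varphi=\psi_\beta$ (and recalling $\int_{\partial\Omega}\psi_\beta=0$) gives $\beta\int_{\partial\Omega}u_{f,\beta}^{p-1}\psi_\beta\,d\mathcal H^{N-1}\to 0$.

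\textbf{Identification of $v$.} Since $\nabla\psi_\beta\rightharpoonup\nabla v$ only weakly, one cannot directly pass to the limit in $|\nabla\psi_\beta|^{p-2}\nabla\psi_\beta$. Instead, invoke Minty's trick: for every $\varphi\in W^{1,p}(\Omega)$,
\begin{equation*}
\int_\Omega \bigl(|\nabla\psi_\beta|^{p-2}\nabla\psi_\beta-|\nabla\varphi|^{p-2}\nabla\varphi\bigr)\cdot(\nabla\psi_\beta-\nabla\varphi)\,dx\ge 0.
\end{equation*}
Use \eqref{weak_formulation} (with test $\psi_\beta$, and then with test $\varphi$) to rewrite the first two terms of the expansion, invoke the two limits from the previous step, and pass to the limit. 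Substituting $\varphi=v+t\eta$ for arbitrary $\eta\in W^{1,p}(\Omega)$ and letting $t\to 0^\pm$ shows that $v$ is a weak solution of \eqref{limitebeta0}.

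\textbf{Strong convergence and uniqueness.} Testing \eqref{limitebeta0} with $v$ and using $\int_{\partial\Omega}v\,d\mathcal H^{N-1}=0$ gives $\int_\Omega|\nabla v|^p\,dx=\int_\Omega fv\,dx$, whereas the identity from Step (i) combined with the boundary asymptotics yields $\int_\Omega|\nabla\psi_\beta|^p\,dx\to\int_\Omega fv\,dx$. Hence the norms converge and, together with weak convergence, $\nabla\psi_\beta\to\nabla v$ strongly in $L^p(\Omega)$, which is precisely the last assertion. Finally, uniqueness of the solution to \eqref{limitebeta0} (determined up to an additive constant, then fixed by $\int_{\partial\Omega}v=0$) ensures convergence of the whole family, not only of a subsequence. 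The hardest step is the boundary asymptotics: $u_{f,\beta}$ itself diverges on $\partial\Omega$, and one must quantify the fact that $\beta u_{f,\beta}^{p-1}$ concentrates at exactly the rate forced by the Neumann compatibility condition; all subsequent passages depend on this delicate cancellation.
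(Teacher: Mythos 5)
Your proof is correct and follows the same overall skeleton as the paper's (test with $\psi_\beta$, use the zero boundary mean together with the Poincar\'e--Wirtinger inequality \eqref{pwirt} to bound $\nabla\psi_\beta$ in $L^p$, identify the limit of the boundary term as $\frac{1}{P(\Omega)}\int_\Omega f\,dx$ times $\int_{\de\Omega}\varphi$, pass to the limit, then upgrade to energy convergence), but it differs in two substantive ways. First, for the boundary asymptotics the paper shows that $\beta^{\frac{1}{p-1}}u_{f,\beta}$ converges in $L^p(\de\Omega)$ to the constant $K=\bigl(\frac{1}{P(\Omega)}\int_\Omega f\,dx\bigr)^{\frac{1}{p-1}}$ by invoking the eigenvalue asymptotics of Theorem \ref{lambdasubetathm} and the bound $J_f(\beta)\le\lambda_p(\beta,\Omega)^{-\frac{1}{p-1}}\int_\Omega f^{p'}dx$; you instead extract $\beta m_\beta^{p-1}\to\frac{1}{P(\Omega)}\int_\Omega f\,dx$ directly from the compatibility identity $\beta\int_{\de\Omega}u_{f,\beta}^{p-1}d\mathcal H^{N-1}=\int_\Omega f\,dx$ and the $L^p(\de\Omega)$-boundedness of $\psi_\beta$, which is more elementary and decoupled from Section \ref{limiti}'s eigenvalue analysis (just note the trivial case $f\equiv 0$ separately, since there $m_\beta\not\to\infty$, and be aware that your elementary estimate on $(m_\beta+\psi_\beta)^{p-1}-m_\beta^{p-1}$ needs the two standard cases $1<p\le 2$ and $p>2$). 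Second, you justify the passage to the limit in the nonlinear term $|\nabla\psi_\beta|^{p-2}\nabla\psi_\beta$ via Minty's monotonicity trick and you add the uniqueness argument that promotes subsequential convergence to convergence of the whole family; the paper simply ``passes to the limit'' in the weak formulation, leaving both of these points implicit. Your version is therefore a more complete argument for the same statement, at the cost of a longer boundary-asymptotics computation.
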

\begin{proof}
Being $f\ge 0$, $\beta>0$, then $u_{f,\beta}\ge 0$. By the weak formulation \eqref{weak_formulation} %and the H\"older inequality, 
we have that 
\[
\int_{\Omega}\abs{\nabla u_{f,\beta}}^pdx+\beta \int_{\de \Omega}{u^p_{f,\beta}}d\mathcal H^{N-1}=\int_{\Omega}fu_{f,\beta}dx.
\]
By using the definition of the first Robin eigenvalue of the $p$-Laplace operator, we can write  
\[
\int_{\Omega}\abs{\nabla u_{f,\beta}}^pdx+\beta \int_{\de \Omega}{u^p_{f,\beta}} d\mathcal H^{N-1}\le \left(\dfrac{1}{\lambda_p(\beta,\Omega)}\right)^{\frac{1}{p-1}}\int_{\Omega}f^{p'}dx.
\]
Hence recalling \eqref{lambdasubeta},
\[
\frac{1}{\beta}\int_{\Omega}\abs{\nabla[\beta^\frac{1}{p-1}u^p_{f,\beta}]} dx + \int_{\de \Omega}[\beta^\frac{1}{p-1}u_{f,\beta}]^pd\mathcal H^{N-1}\le C,
\]
for some positive constant $C$ independent of $\beta$. Hence,
 for $\beta \rightarrow 0^+$, we have that $\nabla[\beta^\frac{1}{p-1}u_{f,\beta}]\rightharpoonup 0$ weakly in $L^p(\Omega)$, and $\beta^\frac{1}{p-1}u_{f,\beta}$ converges in $L^p(\de \Omega)$ to a constant $K$. 
Hence from the equation, using $\psi\equiv 1$ as test function, 
\[
\int_{\Omega}f=\int_{\de \Omega} [\beta^\frac{1}{p-1}u_{f,\beta}]^{p-1}d\mathcal H^{N-1},
\]
then for $\beta\rightarrow 0^+$, we obtain $K=\left(\dfrac{1}{P(\Omega)}\displaystyle\int_{\Omega}fdx\right)^{\frac{1}{p-1}}$.

Now, let 
\[
\psi_\beta=u_{f,\beta} - m_\beta, 
\]
where $m_\beta=\frac{1}{P(\Omega)}\int_{\de\Omega} u_{f,\beta} d\mathcal H^{N-1}$,
be a test function for \eqref{weak_formulation}. Then
it holds that
\begin{equation}
\label{testpsibeta0}
\int_\Omega |\nabla \psi_{\beta}|^p dx+ \beta \int_{\de\Omega} u_{f,\beta}^{p-1}\psi_\beta d\mathcal H^{N-1} = \int_\Omega f\psi_\beta dx.    
\end{equation}
Obviously, $\psi_\beta$ has vanishing mean value on $\de\Omega$; this implies that
\begin{equation}
\label{pass22}
    \int_{\de\Omega}  u_{f,\beta}^{p-1}\psi_\beta d\mathcal H^{N-1} = 
\int_{\de\Omega}  (u_{f,\beta}^{p-1}-m_\beta^{p-1})\psi_\beta  d\mathcal H^{N-1} \ge 0 
\end{equation}
%By using the Poincar\'e-Wirtinger inequality \eqref{pwirt}, it holds that
%\begin{equation}
%    \label{Poincare}
%    \norm{\psi_{\beta}}_{L^p(\Omega)}\leq C \norm{\nabla \psi_{\beta}}_{L^p(\Omega)}.
%\end{equation}
Hence, from \eqref{testpsibeta0}, \eqref{pass22}, the H\"older inequality and the Poincar\'e-Wirtinger inequality \eqref{pwirt}, we have
\begin{equation*}
%\label{gradient-psi}
\norm{\nabla \psi_{\beta}}^{p}_{L^p(\Omega)}\le \norm{f}_{L^{p'}(\Omega)}\norm{\psi_{\beta}}_{L^p(\Omega)}\le C\norm{f}_{L^{p'}(\Omega)}\norm{\nabla\psi_{\beta}}_{L^p(\Omega)},
\end{equation*}
and then, using also again \eqref{pwirt},
\[
\norm{\psi_{\beta}}_{L^p(\Omega)}\le C\norm{\nabla \psi_{\beta}}_{L^p(\Omega)}\le C \|f\|_{L^{p'}(\Omega)}
\]

This implies that, up to a subsequence, $\psi_\beta$ converges weakly in $W^{1,p}(\Omega)$ and strongly in $L^p(\Omega)$ and in $L^p(\de\Omega)$, to a function $v\in W^{1,p}(\Omega)$. 
By using also the convergence of $\beta^{\frac{1}{p-1}}u_{f,\beta}$ to $K$ in $L^p(\de\Omega)$, passing to the limit in 
\[
\int_\Omega |\nabla \psi_\beta|^{p-2}\nabla \psi_\beta \cdot \nabla \varphi dx +\beta\int_{\de\Omega}u_{f,\beta}^{p-1}\varphi d\mathcal H^{N-1}=\int_\Omega f\varphi dx
\]
we get
\[
\int_\Omega |\nabla v|^{p-2}\nabla v \cdot \nabla \varphi +K^{p-1}\int_{\de\Omega}\varphi \, d\mathcal H^{N-1}=\int_\Omega f\varphi dx,
\]
that means that the function $v$ is a solution to \eqref{limitebeta0}. Again, passing to the limit in \eqref{testpsibeta0}, we have
\[
\lim_{\beta \to 0^+}
\int_{\Omega}\abs{\nabla u_{f,\beta}}^p\;dx=\int_{\Omega}fv-K^{p-1}\int_{\de \Omega}v=\int_{\Omega}\abs{\nabla v}^p\;dx
\]
and the proof is concluded.
\end{proof}

\begin{rem}[The case $\beta<0$]
For the sake of completeness, we consider the behavior of $\lambda_p(\beta,\Omega)$ also when $\beta\to0^-$. In this case, the trace inequality \eqref{trace_inequality} allows us to prove that the limit \eqref{lambdasubeta} holds true also when $\beta\to 0^{-}$. This is proved in the following result.
\end{rem}
 \begin{thm}
% \label{lambdasubetaneg}
Let $\Omega$ be a bounded Lipschitz domain in $\R^{N}$ and $\lambda_p(\beta,\Omega)$ be the first Robin eigenvalue, with $\beta<0$. Then
    \begin{equation*}
            %\label{lambdasubeta2}
      \lim_{\beta\to 0^{-}}  \dfrac{\lambda_p(\beta,\Omega)}{\beta}= \dfrac{P(\Omega)}{\abs{\Omega}}.
    \end{equation*}
   \end{thm}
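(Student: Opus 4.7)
The plan is to mirror the proof of Theorem \ref{lambdasubetathm}, with the trace inequality \eqref{trace_inequality} playing the role that was automatic for positive $\beta$: namely, it allows us to reabsorb the (now wrong-signed) boundary term into the Dirichlet term.

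First, I would show that $\lambda_p(\beta,\Omega)$ is well defined and tends to $0$ as $\beta\to 0^-$. Using the constant test function $\varphi\equiv 1$ in \eqref{variationalaut} immediately gives the upper bound $\lambda_p(\beta,\Omega)\le \beta \,P(\Omega)/\abs{\Omega}$, which is negative and tends to $0$. For a matching lower bound, I would fix $\eps>\eps_{\Omega}$ so that $|\beta|\eps<1$ for $|\beta|$ small, and apply \eqref{trace_inequality} to the Rayleigh quotient to get
\[
\int_{\Omega}\abs{\nabla u}^p dx+\beta\int_{\de\Omega}\abs{u}^p d\mathcal H^{N-1}\ge (1-|\beta|\eps)\int_{\Omega}\abs{\nabla u}^p dx-|\beta|\dfrac{P(\Omega)}{\abs{\Omega}}(1+C(\eps))\int_{\Omega}\abs{u}^p dx,
\]
which yields $\lambda_p(\beta,\Omega)\ge -|\beta|P(\Omega)(1+C(\eps))/\abs{\Omega}\to 0$.

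Next, let $u_\beta>0$ be a first eigenfunction of $\lambda_p(\beta,\Omega)$ normalized by $\norm{u_\beta}_{L^p(\Omega)}=1$. Applying the same trace inequality to the identity $\lambda_p(\beta,\Omega)=\int_\Omega\abs{\nabla u_\beta}^p dx+\beta\int_{\de\Omega}u_\beta^p d\mathcal H^{N-1}$ gives
\[
(1-|\beta|\eps)\int_\Omega\abs{\nabla u_\beta}^p dx\le \lambda_p(\beta,\Omega)+|\beta|\dfrac{P(\Omega)}{\abs{\Omega}}(1+C(\eps)),
\]
so $\norm{\nabla u_\beta}_{L^p(\Omega)}\to 0$ as $\beta\to 0^-$ and $\{u_\beta\}$ is bounded in $W^{1,p}(\Omega)$. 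By the compactness of the embedding $W^{1,p}(\Omega)\hookrightarrow L^p(\Omega)$ and the compactness of the trace in $L^p(\de\Omega)$, along a subsequence $u_\beta$ converges weakly in $W^{1,p}(\Omega)$ and strongly in $L^p(\Omega)$ and $L^p(\de\Omega)$ to a function $u_0\in W^{1,p}(\Omega)$ with $\nabla u_0=0$ and $\norm{u_0}_{L^p(\Omega)}=1$; hence $u_0\equiv \abs{\Omega}^{-1/p}$ on $\overline\Omega$.

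Finally, testing the eigenvalue equation \eqref{eigenvalue_problem} with $\varphi\equiv 1$ yields
\[
\dfrac{\lambda_p(\beta,\Omega)}{\beta}=\dfrac{\ds\int_{\de\Omega} u_\beta^{p-1}\,d\mathcal H^{N-1}}{\ds\int_{\Omega} u_\beta^{p-1}\,dx},
\]
and passing to the limit along the chosen subsequence (both integrals converge to positive values in view of the strong convergences above) gives $P(\Omega)/\abs{\Omega}$. Since the limit is independent of the subsequence, the whole family converges and the thesis follows. The main obstacle is precisely the step where the boundary term has the wrong sign: without the quantitative trace inequality \eqref{trace_inequality}, with $C(\eps)\to 0$ allowing $\eps$ large, one could not simultaneously show that $\lambda_p(\beta,\Omega)$ is bounded below and that $\nabla u_\beta\to 0$ in $L^p$.
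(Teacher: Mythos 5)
Your proof is correct, but it takes a genuinely different route from the one in the paper. The paper's argument is a two-line squeeze that never touches the eigenfunctions: the constant test function gives $\lambda_p(\beta,\Omega)\le \beta P(\Omega)/\abs{\Omega}$, i.e.\ $\frac{P(\Omega)}{\abs{\Omega}}\le\frac{\lambda_p(\beta,\Omega)}{\beta}$, while the trace inequality \eqref{trace_inequality} applied with the $\beta$-dependent choice $\eps=-1/\beta\to+\infty$ gives the matching upper bound $\frac{\lambda_p(\beta,\Omega)}{\beta}\le \frac{P(\Omega)}{\abs{\Omega}}\bigl(1+C(-1/\beta)\bigr)$; the conclusion then hinges entirely on the refinement $C(\eps)\to0$ as $\eps\to+\infty$, which is the whole point of the paper's version of the trace theorem. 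You instead run the compactness argument of Theorem \ref{lambdasubetathm} again: you use \eqref{trace_inequality} only with a \emph{fixed} large $\eps$ to get coercivity of the Rayleigh quotient and the decay $\norm{\nabla u_\beta}_{L^p}\to0$, extract a subsequence converging to the constant $\abs{\Omega}^{-1/p}$, and pass to the limit in the identity obtained by testing with $\varphi\equiv1$. This is longer, but it has two advantages: it only needs the trace inequality qualitatively (any finite $C(\eps)$ would do, since it is multiplied by $\abs{\beta}\to0$ — so your closing remark that the property $C(\eps)\to0$ is indispensable is actually an overstatement for your own argument), and it yields the extra information that the normalized eigenfunctions converge to a constant, in parallel with the $\beta\to0^+$ case. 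The only point you gloss over, which the paper also leaves implicit, is the existence of a nonnegative minimizer for $\beta<0$; this follows from the coercivity estimate you derive together with the direct method and the compact trace embedding.
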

\begin{proof}
    Let $u_\beta$ be a positive eigenfunction of $\lambda_p(\beta,\Omega)$ with $\norm{u_{\beta}}_{L^p(\Omega)}=1$. Then
    \[
    \lambda_p(\beta,\Omega)=\int_{\Omega}\abs{\nabla u_{\beta}}^pdx+\beta\int_{\de \Omega}\abs{u_{\beta}}^pd\mathcal H^{N-1}.
    \]
Using the trace inequality \eqref{trace_inequality} with $\eps=-\frac{1}{\beta}$ (as $|\beta|$ is small)
we get
\[
\frac{P(\Omega)}{|\Omega|} \le \frac{\lambda_p(\beta,\Omega)}{\beta}
\le \frac{P(\Omega)}{|\Omega|}\left(1+C\left(\frac{1}{-\beta}\right)\right).
\]
The thesis follows, being $C(-1/\beta)\to 0$ as $\beta\to 0^-$.
\end{proof}

\section{Upper bounds for \texorpdfstring{$\lambda_p(\beta,\Omega)$}{TEXT}}
In this Section, we want to get upper bounds for $\lambda_p(\beta,\Omega)$. % from above in terms of $\lambda_p^D(\Omega)$. 
The first general result is the following.
\begin{thm}
Let $\Omega \subset \R^N$ be an open bounded set, with $C^{1,\gamma}$ boundary, $\beta$ be a positive number, and $f\in L^{\infty}(\Omega)$, with $f\ge0$. then
\begin{equation}
\label{stimabase}
\dfrac{1}{\lambda_p(\beta,\Omega)^{\frac{1}{p-1}}}\geq \dfrac{\displaystyle\int_{\Omega}\abs{\nabla u_{f,\infty}}^p\;dx}{\displaystyle\int_{\Omega}f^{\frac{p}{p-1}}\;dx}+\dfrac{\beta^{-\frac{1}{p-1}}}{P(\Omega)^{\frac{1}{p-1}}}\dfrac{\displaystyle\left(\int_{\Omega}f\:dx\right)^{\frac{p}{p-1}}}{\displaystyle\int_{\Omega}f^{\frac{p}{p-1}}\:dx},
\end{equation}
where $P(\Omega)=\mathcal H^{N-1}(\de\Omega)$ denotes the perimeter of $\Omega$, and $u_{f,\infty}$ satisfies \eqref{dirichletf}.
\end{thm}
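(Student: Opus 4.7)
The plan is to combine three ingredients already established in the excerpt: the max-characterization \eqref{autovalore}, the convexity principle \eqref{convexity}, and the large-$\beta$ asymptotics of Proposition \ref{proplimbeta}. By \eqref{autovalore}, for every admissible $f$ one has $\lambda_p(\beta,\Omega)^{-1/(p-1)}\ge J_f(\beta)/\int_\Omega f^{p'}\,dx$, so it suffices to establish a lower bound on $J_f(\beta)$ of the form
\[
J_f(\beta)\ge \int_\Omega |\nabla u_{f,\infty}|^p\,dx+\frac{1}{\beta^{1/(p-1)}\,P(\Omega)^{1/(p-1)}}\left(\int_\Omega f\,dx\right)^{p/(p-1)}.
\]

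First, I would swap the roles of $\alpha$ and $\beta$ in \eqref{convexity}, obtaining $J_f(\alpha)\le J_f(\beta)+(\alpha^{-1/(p-1)}-\beta^{-1/(p-1)})H(\beta)$, and then let $\alpha\to+\infty$. Since $\alpha^{-1/(p-1)}\to 0$ and Proposition \ref{proplimbeta} gives $J_f(\alpha)\to\int_\Omega|\nabla u_{f,\infty}|^p\,dx$, this yields
\[
J_f(\beta)\ge \int_\Omega |\nabla u_{f,\infty}|^p\,dx+\frac{1}{\beta^{1/(p-1)}}H(\beta).
\]
Thus the remaining task is to produce a lower bound on $H(\beta)$ in terms of $\int_\Omega f\,dx$ and $P(\Omega)$.

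For this, I would use the Robin boundary condition in \eqref{plaplaciano}, which gives $|\nabla u_{f,\beta}|^{p-2}\nabla u_{f,\beta}\cdot\nu=-\beta u_{f,\beta}^{p-1}$ on $\partial\Omega$, so that
\[
H(\beta)=\beta^{p/(p-1)}\int_{\partial\Omega}u_{f,\beta}^p\,d\mathcal H^{N-1}.
\]
Testing the weak formulation \eqref{weak_formulation} with $\varphi\equiv 1$ yields the identity $\int_\Omega f\,dx=\beta\int_{\partial\Omega}u_{f,\beta}^{p-1}\,d\mathcal H^{N-1}$. The classical Hölder inequality on $\partial\Omega$ gives
\[
\left(\int_{\partial\Omega}u_{f,\beta}^{p-1}\,d\mathcal H^{N-1}\right)^{p/(p-1)}\le P(\Omega)^{1/(p-1)}\int_{\partial\Omega}u_{f,\beta}^p\,d\mathcal H^{N-1},
\]
and combining the last three displays produces the clean bound $H(\beta)\ge (\int_\Omega f\,dx)^{p/(p-1)}/P(\Omega)^{1/(p-1)}$, independent of $\beta$. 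Substituting this into the lower bound for $J_f(\beta)$ and dividing by $\int_\Omega f^{p'}\,dx$, the theorem follows.

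The main conceptual point (not really an obstacle once noticed) is that, although \eqref{convexity} is phrased as an upper bound for $J_f(\beta)$, by exchanging $\alpha\leftrightarrow\beta$ and sending one variable to infinity it becomes a lower bound matching the Dirichlet limit; all remaining steps are routine applications of Hölder's inequality and the Robin boundary condition.
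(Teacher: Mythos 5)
Your proposal is correct and follows essentially the same route as the paper: swap $\alpha$ and $\beta$ in the convexity inequality \eqref{convexity}, send $\alpha\to+\infty$ using Proposition \ref{proplimbeta} to identify $J_f(\infty)=\int_\Omega|\nabla u_{f,\infty}|^p\,dx$, bound $H(\beta)$ from below via H\"older on $\partial\Omega$ together with the identity $\int_\Omega f\,dx=\beta\int_{\partial\Omega}u_{f,\beta}^{p-1}\,d\mathcal H^{N-1}$, and conclude with \eqref{autovalore}. The only (immaterial) difference is that you substitute the Robin boundary condition into $H(\beta)$ before applying H\"older, whereas the paper applies H\"older to the conormal derivative directly and then invokes the equation.
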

\begin{proof}
Let $f$ be fixed. Then by using the convexity of $J_f(\beta)$,
\[
J_f(\alpha)\leq J_f(\beta)+\left(\dfrac{1}{\alpha^{\frac{1}{p-1}}}-\dfrac{1}{\beta^{\frac{1}{p-1}}}\right)\int_{\de \Omega}\abs{\abs{\nabla u_{f,\beta}}^{p-2}\left(\dfrac{\de u_{f,\beta}}{\de \nu}\right)}^{\frac{p}{p-1}}d\mathcal H^{N-1}.
\]
If we multiply for $\beta^{\frac{1}{p-1}}$,
\[\beta^{\frac{1}{p-1}}J_f(\alpha)\leq \beta^{\frac{1}{p-1}}J_f(\beta)+\left(-1+\left(\dfrac{\beta}{\alpha}\right)^{\frac{1}{p-1}}\right)\int_{\de\Omega}\abs{\abs{\nabla u_{f,\beta}}^{p-2}\left(\dfrac{\de u_{f,\beta}}{\de \nu}\right)}^{\frac{p}{p-1}}d\mathcal H^{N-1}, 
\]
then for $\alpha$ that goes to infinity
\[
\beta^{\frac{1}{p-1}}J_f(\beta)\geq \beta^{\frac{1}{p-1}}J_f(\infty)+\int_{\de \Omega}\abs{\abs{\nabla u_{f,\beta}}^{p-2}\left(\dfrac{\de u_{f,\beta}}{\de \nu}\right)}^{\frac{p}{p-1}}d\mathcal H^{N-1}. 
\]
We know by Proposition \ref{proplimbeta} that $J_f(\infty)=\ds\int_{\Omega}\abs{\nabla u_{f,\infty}}^pdx$ and using the H\"older inequality and the equation solved by $u_{f,\beta}$
\begin{multline*}
\int_{\de \Omega}\abs{\abs{\nabla u_{f,\beta}}^{p-2}\left(\dfrac{\de u_{f,\beta}}{\de \nu}\right)}^{\frac{p}{p-1}}d\mathcal H^{N-1}\\ \ge \dfrac{1}{P(\Omega)^{\frac{1}{p-1}}}\left(\int_{\de \Omega}\abs{\abs{\nabla u_{f,\beta}}^{p-2}\left(\dfrac{\de u_{f,\beta}}{\de \nu}\right)}d\mathcal{H}^{N-1}\right)^{\frac{p}{p-1}}\; = \dfrac{1}{P(\Omega)^{\frac{1}{p-1}}}\left(\int_{\Omega}f\:dx\right)^{\frac{p}{p-1}}.
\end{multline*}
Then, putting all together, we obtain
\[
J_f(\beta)\geq \int_{\Omega}\abs{\nabla u_{f,\infty}}^p dx+\dfrac{\beta^{-\frac{1}{p-1}}}{P(\Omega)^{\frac{1}{p-1}}}\left(\int_{\Omega}f\,dx\right)^{\frac{p}{p-1}}.
\]
So finally, by using \eqref{autovalore} we get that
\[
\dfrac{1}{\lambda_p(\beta,\Omega)^{\frac{1}{p-1}}}\geq \dfrac{\displaystyle\int_{\Omega}\abs{\nabla u_{f,\infty}}^p\;dx}{\displaystyle\int_{\Omega}f^{\frac{p}{p-1}}\;dx}+\dfrac{\beta^{-\frac{1}{p-1}}}{P(\Omega)^{\frac{1}{p-1}}}\dfrac{\displaystyle\left(\int_{\Omega}f\:dx\right)^{\frac{p}{p-1}}}{\displaystyle\int_{\Omega}f^{\frac{p}{p-1}}\:dx}
\]
that is the result.
\end{proof}
Now we are in position to prove the main bounds for $\lambda_{p}(\beta,\Omega)$.
So, let $u_\infty$ be a first positive Dirichlet eigenfunction of $-\Delta_{p}$, that is a positive function such that
\begin{equation}
\label{dirichlet}
    \begin{cases}
        -\Delta_p u_\infty=\lambda_p^D(\Omega) u_\infty^{p-1} & \textrm{in } \Omega\\
        u=0 & \textrm{on } \de \Omega.
    \end{cases}
\end{equation}
We recall that the following reverse H\"older inequality holds:
\begin{thm}[\cite{reverseholder}]
    \label{reverseholder}
    For $0<q<r\leq\infty$ we have
    \[
    {\norm{u_\infty}}_r \leq \Tilde{K} {\norm {u_\infty}}_q,
    \]
    with 
    \begin{equation}
    \label{costantek}
    \Tilde{K}=\Tilde{K}(N, p, q, r, \lambda^D_p(\Omega))=\dfrac{\norm{v}_r}{\norm{v}_q},
    \end{equation}
    and $v$ is a first eigenfunction of \eqref{dirichlet} in a ball $B$ such that $\lambda^{D}_{p}(B)=\lambda^{D}_{p}(\Omega)$.
\end{thm}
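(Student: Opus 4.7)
The plan is to establish this Chiti-type reverse Hölder inequality by combining Schwarz symmetrization, the Faber--Krahn inequality, and a single-crossing argument.

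First I would replace $u_\infty$ with its spherically symmetric decreasing rearrangement $u_\infty^*$ on the ball $\Omega^*$ centered at the origin with $|\Omega^*|=|\Omega|$; equimeasurability gives $\|u_\infty^*\|_s = \|u_\infty\|_s$ for every $s\in(0,\infty]$, so the statement reduces to a comparison between $u_\infty^*$ and $v$. The P\'olya--Szeg\H{o} principle applied to the Rayleigh quotient defining $\lambda_p^D$ yields the Faber--Krahn inequality $\lambda_p^D(\Omega^*)\leq \lambda_p^D(\Omega)=\lambda_p^D(B)$, and since the first Dirichlet eigenvalue of a ball is strictly decreasing in the radius, this forces $\Omega^*\subseteq B$. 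After normalising so that $\|u_\infty^*\|_q = \|v\|_q$, the goal becomes $\|u_\infty^*\|_r \leq \|v\|_r$.

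The heart of the argument is a \emph{single-crossing} property: there exists a unique $r_0\in (0,R_B)$ such that $u_\infty^*\leq v$ on $\{|x|\leq r_0\}$ and $u_\infty^* \geq v$ on $\{r_0 \leq |x|\leq R_{\Omega^*}\}$, while $u_\infty^*\equiv 0\leq v$ on $B\setminus \Omega^*$. I would obtain it by applying Talenti's comparison theorem for the $p$-Laplacian to the equation $-\Delta_p u_\infty = \lambda_p^D(\Omega) u_\infty^{p-1}$, so that $u_\infty^*$ is pointwise dominated on $\Omega^*$ by the radial solution of $-\Delta_p w = \lambda_p^D(\Omega)(u_\infty^*)^{p-1}$ with zero boundary data; combining this with the ODE satisfied by the radial function $v$ rules out multiple crossings. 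Once the single crossing is in place, $\|u_\infty^*\|_r\leq \|v\|_r$ follows from a standard weighted rearrangement manipulation, exploiting that the ratio $(t^r-s^r)/(t^q-s^q)$ is monotone in $\max(t,s)$ for $r>q>0$: integrating $u_\infty^{*r}-v^r$ against a weight built from $v(r_0)^{r-q}$ causes the contributions from the inner region $\{u_\infty^*\leq v\}$ and the outer region $\{u_\infty^*\geq v\}$ to cancel with the correct sign.

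The main obstacle is the second step: Talenti's theorem handles problems with a \emph{fixed} right-hand side, whereas here $f=\lambda_p^D(\Omega) u_\infty^{p-1}$ depends on the unknown, so one cannot apply Talenti verbatim and must iterate or run a monotone/fixed-point argument to produce the radial pointwise bound. This delicacy, together with the boundary behaviour at $\partial\Omega^*$ (where $u_\infty^*$ vanishes but $v$ stays positive up to $\partial B$), is precisely what is treated in the cited reference \cite{reverseholder}.
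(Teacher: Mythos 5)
The paper does not prove this statement: it is quoted directly from \cite{reverseholder}, so the only meaningful comparison is with the argument of that reference, which is indeed a Chiti-type symmetrization proof of the general kind you outline (rearrangement, Faber--Krahn, a single-crossing comparison with the radial eigenfunction, then a majorization step). Your plan is therefore in the right family of ideas, but as written it has two genuine gaps.

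First, the inclusion you extract from Faber--Krahn is backwards. From $\lambda_p^D(\Omega^*)\le\lambda_p^D(\Omega)=\lambda_p^D(B)$ and the fact that the first Dirichlet eigenvalue of a ball is strictly decreasing in the radius, the ball $\Omega^*$ must have the \emph{larger} radius, i.e.\ $B\subseteq\Omega^*$, not $\Omega^*\subseteq B$. Your crossing picture is built on the wrong inclusion and is internally inconsistent: it asserts $u_\infty^*\le v$ near the centre, $u_\infty^*\ge v$ on an annulus of $\Omega^*$, and then again $u_\infty^*=0\le v$ on $B\setminus\Omega^*$, which is \emph{two} sign changes of $u_\infty^*-v$; with two crossings the final majorization step does not close. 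In the correct configuration $v$ is extended by zero on $\Omega^*\setminus B$ and there is exactly one crossing, with $v\ge u_\infty^*$ on the inner region where both functions are large, which is precisely what turns $\norm{u_\infty^*}_q=\norm{v}_q$ into $\norm{u_\infty^*}_r\le\norm{v}_r$ for $r>q$. Second, the central comparison is left unproved. You rightly observe that Talenti's theorem cannot be applied as a black box because the datum $\lambda_p^D(\Omega)\,u_\infty^{p-1}$ depends on the unknown, but ``iterate or run a monotone/fixed-point argument'' is not a proof, and no monotonicity is exhibited that would make such an iteration converge. The actual argument (Chiti's method, carried out for the $p$-Laplacian in \cite{reverseholder}) does not pass through a pointwise bound by an auxiliary radial solution at all: one integrates the equation over the level sets $\{u_\infty>t\}$ and uses the coarea formula together with the isoperimetric inequality to derive a first-order integro-differential inequality for the decreasing rearrangement $u_\infty^*(s)$, of the schematic form
\[
-\frac{du_\infty^*}{ds}\;\le\; C_N\, s^{-p'\left(1-\frac1N\right)}\left(\lambda_p^D(\Omega)\int_0^s \left(u_\infty^*(t)\right)^{p-1}dt\right)^{\frac{1}{p-1}},
\]
which the radial eigenfunction of $B$ satisfies with equality; the single-crossing property is then obtained by comparing these two relations directly. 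Until that inequality (or an equivalent substitute) is established and the geometry in the first point is corrected, the proposal does not constitute a proof.
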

We can now prove the following main results.

\begin{thm}
Let $\Omega \subset \R^N$ be an open bounded set, with $C^{1,\gamma}$ boundary, and let $\beta$ be a positive number. Then,
\begin{equation}
\label{upper1}
  \dfrac{1}{\lambda_p^{\frac{1}{p-1}}(\beta,\Omega)}\geq \dfrac{1}{\lambda^D_p(\Omega)^{\frac{1}{p-1}}}+\dfrac{\bar{K}}{(\beta P(\Omega))^{\frac{1}{p-1}}},
  \end{equation}
  where $\bar K=\Tilde{K}^p$, and $\Tilde{K}$ is given in \eqref{costantek} with $r=q+1=p$.
\end{thm}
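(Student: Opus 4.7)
The plan is to apply the general bound \eqref{stimabase} to a carefully chosen admissible $f$, and then recognize both resulting ratios as familiar quantities. The natural choice is $f = u_\infty^{p-1}$, where $u_\infty$ is a first positive Dirichlet eigenfunction of $-\Delta_p$ on $\Omega$ (so $u_\infty$ solves \eqref{dirichlet}). The key observation is that with this choice, the solution $u_{f,\infty}$ to the Dirichlet problem \eqref{dirichletf} is, by homogeneity and uniqueness, exactly a constant multiple of $u_\infty$, namely $u_{f,\infty} = \lambda_p^D(\Omega)^{-1/(p-1)} u_\infty$. This makes the first term in \eqref{stimabase} explicit.

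More precisely, using $(p-1)\cdot\frac{p}{p-1} = p$ one finds $\int_\Omega f^{p/(p-1)}dx = \int_\Omega u_\infty^p\,dx$, while the Rayleigh identity $\int_\Omega |\nabla u_\infty|^p dx = \lambda_p^D(\Omega)\int_\Omega u_\infty^p\,dx$ together with the scaling above gives
\[
\frac{\int_\Omega |\nabla u_{f,\infty}|^p dx}{\int_\Omega f^{p/(p-1)} dx} = \frac{1}{\lambda_p^D(\Omega)^{1/(p-1)}}.
\]
For the second term, a direct computation with $f = u_\infty^{p-1}$ yields
\[
\frac{\bigl(\int_\Omega f\,dx\bigr)^{p/(p-1)}}{\int_\Omega f^{p/(p-1)}\,dx} = \frac{\|u_\infty\|_{L^{p-1}(\Omega)}^p}{\|u_\infty\|_{L^{p}(\Omega)}^p}.
\]

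At this point the bound \eqref{stimabase} has been reduced to an inequality involving only $\lambda_p^D(\Omega)$ and the ratio of norms of $u_\infty$. To recognize the constant $\bar K$ in the form stated in the theorem, I would then invoke the reverse Hölder inequality of Theorem \ref{reverseholder}, applied with $r = p$ and $q = p-1$. This compares the ratio $\|u_\infty\|_p/\|u_\infty\|_{p-1}$ for the Dirichlet eigenfunction on $\Omega$ to the corresponding ratio for the first eigenfunction $v$ on the ball $B$ with $\lambda_p^D(B)=\lambda_p^D(\Omega)$, and the extremal quantity on the right is precisely $\tilde K$, so that $\tilde K^p = \bar K$ appears with the desired sign. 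Substituting back into \eqref{stimabase} gives \eqref{upper1}.

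The main subtlety, and essentially the only non-mechanical step, is the choice of test function $f$: it is dictated by the requirement that the first term in \eqref{stimabase} collapse exactly to $\lambda_p^D(\Omega)^{-1/(p-1)}$, which forces $u_{f,\infty}$ to be proportional to $u_\infty$ and hence $f$ to be proportional to $u_\infty^{p-1}$. Once this is noticed, the rest is careful bookkeeping with the exponents $p$ and $p' = p/(p-1)$, and an invocation of the Payne–Rayner/Chiti-type reverse Hölder inequality to package the remaining integral ratio into the ball-extremal constant $\bar K$.
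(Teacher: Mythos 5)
Your proposal follows exactly the paper's own proof: the same choice $f=u_\infty^{p-1}$ in \eqref{stimabase}, the same identification $u_{f,\infty}=\lambda_p^D(\Omega)^{-1/(p-1)}u_\infty$ via homogeneity, the same reduction of the first term through the Rayleigh identity, and the same invocation of the reverse H\"older inequality of Theorem \ref{reverseholder} with $r=p$, $q=p-1$ to package the ratio $\bigl(\int_\Omega u_\infty^{p-1}dx\bigr)^{p/(p-1)}/\int_\Omega u_\infty^p dx$ into the ball-extremal constant. No gaps beyond those already present in the paper's argument.
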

\begin{proof}
If we choose in \eqref{stimabase}
\[
f=u_\infty^{p-1},
\]
where $u_\infty\in W^{1,p}_0(\Omega)$ is a first positive Dirichlet eigenfunction of $\lambda_p^D(\Omega)$, then
\[
u_f=\dfrac{1}{{\lambda^D_p(\Omega)}^{\frac{1}{p-1}}}u_\infty
\]
and we obtain 
\[\dfrac{1}{\lambda_p(\beta,\Omega)^{\frac{1}{p-1}}}\geq \dfrac{1}{{\lambda_p^D(\Omega)}^{\frac{p}{p-1}}}\dfrac{\displaystyle\int_{\Omega}\abs{\nabla u_\infty}^p\;dx}{\displaystyle\int_{\Omega}u_\infty^p\;dx}+\dfrac{\beta^{-\frac{1}{p-1}}}{P(\Omega)^{\frac{1}{p-1}}}\dfrac{\displaystyle\left(\int_{\Omega}u_\infty^{p-1}\:dx\right)^{\frac{p}{p-1}}}{\displaystyle\int_{\Omega}u_\infty^p\:dx}.
\]

By Theorem \ref{reverseholder},
\[
\left(\int_{\Omega}u_\infty^{p-1}dx\right)^{\frac{p}{p-1}}\;dx\geq \dfrac{1}{\Tilde{K}^p}\int_{\Omega}u_\infty^pdx,
\] then
\[
\dfrac{1}{\lambda_p^{\frac{1}{p-1}}(\beta,\Omega)}\ge \dfrac{1}{{\lambda^D}_p(\Omega)^{\frac{1}{p-1}}}+\dfrac{\beta^{-\frac{1}{p-1}}}{P(\Omega)^{\frac{1}{p-1}}}\Tilde{K}^p
\]
that is the thesis.
\end{proof}

For a given open bounded domain $\Omega$, let $u_{\Omega}\in W^{1,p}_0(\Omega)$ be the unique solution to 
\begin{equation*}
%\label{torsion}
\begin{cases}
    -\Delta_pu_{\Omega}=1 & \textrm{in}\; \Omega\\
    u_{\Omega}=0 & \textrm{on}\; \de \Omega.
\end{cases}
\end{equation*} 
The $p-$torsional rigidity is 
\[T_p(\Omega)=\left(\max_{\varphi\in W_0^{1,p}(\Omega)}\dfrac{\left(\ds\int_{\Omega}\abs{\varphi}dx\right)^p}{\displaystyle\int_{\Omega}\abs{\nabla \varphi}^p}dx\right)^{\frac{1}{p-1}}=\int_{\Omega}u_{\Omega}dx=\int_{\Omega}\abs{\nabla u_{\Omega}}^pdx.
\]

\begin{thm}
Let $\Omega \subset \R^N$ be an open bounded set, with $C^{1,\gamma}$ boundary, and let $\beta$ be a positive number. Then,
\begin{equation}
\label{tors1}
\dfrac{1}{\lambda_p(\beta,\Omega)^{\frac{1}{p-1}}}\geq \dfrac{T_p(\Omega)}{\abs{\Omega}}+\left(\dfrac{|\Omega|}{\beta P(\Omega)}\right)^{\frac{1}{p-1}}.
\end{equation}
\end{thm}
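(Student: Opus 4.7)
The plan is to apply the general inequality \eqref{stimabase} with the constant choice $f\equiv 1$, which is permissible since $f\in L^{\infty}(\Omega)$ and $f\geq 0$. With this choice, the associated Dirichlet problem \eqref{dirichletf} becomes precisely the torsion problem, so $u_{f,\infty}$ coincides with $u_{\Omega}$, and consequently $\int_{\Omega}|\nabla u_{f,\infty}|^{p}\,dx=\int_{\Omega}|\nabla u_{\Omega}|^{p}\,dx=T_{p}(\Omega)$ by definition of the $p$-torsional rigidity.

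Once the substitution is made, the remaining work is purely algebraic: the two integrals $\int_{\Omega}f^{p/(p-1)}\,dx$ and $\int_{\Omega}f\,dx$ both collapse to $|\Omega|$, so the first term of \eqref{stimabase} becomes $T_p(\Omega)/|\Omega|$, and the second term becomes
\[
\dfrac{\beta^{-\frac{1}{p-1}}}{P(\Omega)^{\frac{1}{p-1}}}\cdot\dfrac{|\Omega|^{\frac{p}{p-1}}}{|\Omega|}=\left(\dfrac{|\Omega|}{\beta P(\Omega)}\right)^{\frac{1}{p-1}},
\]
which is exactly the second summand on the right-hand side of \eqref{tors1}.

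There is no real obstacle here beyond verifying that $f\equiv 1$ is an admissible test function for \eqref{stimabase} (which is immediate) and recognizing that the limit function $u_{f,\infty}$ produced by Proposition \ref{proplimbeta} for $f\equiv 1$ is the torsion function $u_{\Omega}$, since both solve \eqref{dirichletf} with source $1$ and both lie in $W^{1,p}_{0}(\Omega)$, and the $p$-Laplace operator has a unique weak solution for this boundary value problem. With these two identifications in hand, inequality \eqref{tors1} follows from \eqref{stimabase} by direct substitution, giving the claimed bound.
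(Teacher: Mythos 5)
Your proof is correct and is exactly the paper's argument: the paper's proof reads ``It is immediate by choosing $f\equiv 1$ in \eqref{stimabase}.'' Your identification of $u_{f,\infty}$ with the torsion function $u_\Omega$ and the subsequent algebra are both right, so you have simply spelled out the details the paper leaves implicit.
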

\begin{proof}
It is immediate by choosing $f\equiv 1$ in \eqref{stimabase}.
\end{proof}
\begin{rem}
    Using $w=1$ or $w=u_\infty$ as test function in the variational characterization \eqref{variationalaut} of $\lambda_p(\beta,\Omega)$, we immediately get
    \[
    \lambda_p(\beta,\Omega)\le \min\left\{\lambda_p^D(\Omega),\beta\frac{P(\Omega)}{|\Omega|}\right\}.
    \]
    The two inequalities \eqref{upper1} and \eqref{tors1} then give a stronger bound on $\lambda_p(\beta,\Omega)$.
\end{rem}
\begin{rem}
  For $\beta\rightarrow \infty$, by using that the first Robin eigenvalue of the $p$-Laplace operator goes to the first Dirichlet eigenvalue of the $p$-Laplace operator, the inequality \eqref{tors1} becomes
  \[
  \dfrac{T_p(\Omega)\lambda_p^D(\Omega)^{\frac{1}{p-1}}}{|\Omega|} \le 1,
  \]
  which corresponds to a well-known inequality due to P\'olya in the case of the Laplace operator.
\end{rem}

\section{A lower bound for \texorpdfstring{$\lambda_p(\beta,\Omega)$}{TEXT}}
\label{sectionlower}
In this Section, we want to get a lower bound for $\lambda_p(\beta,\Omega)$. % from above in terms of $\lambda_p^D(\Omega)$. 
To this aim, let us define 
\[
   \nu_{p}=
   \inf_{\abs{\nabla v}^{p-2}\frac{\de v}{\de \nu}=K}\dfrac{\displaystyle\int_{\Omega}f^{p'}dx}{\displaystyle\int_{\Omega}\abs{\nabla v}^pdx},
   \]
   where $K=\frac{1}{P(\Omega)}\int_\Omega fdx$, and the infimum is computed among all the functions $f\ge 0$, $f\in L^\infty(\Omega)$ and where $v$ is a solution to the problem \eqref{limitebeta0}.
 \begin{rem}
      It is easily seen that $\nu_p > 0$. Indeed, if we denote with $v_{\de\Omega}=\dashint_{\de  \Omega} v d\mathcal H^{N-1}$, we have $\int_{\de\Omega}(v-v_{\de\Omega})d\mathcal H^{N-1}=0$; by using the weak formulation of \eqref{limitebeta0}, the H\"older inequality and the Poincar\'e-Wirtinger inequality \eqref{pwirt}, we have
      \begin{multline*}
          \int_{\Omega}\abs{\nabla v}^pdx=\int_{\Omega}f(v-v_{\de\Omega})dx\le \left(\int_{\Omega}f^{p'}dx\right)^{\frac{1}{p'}}\left(\int_{\Omega}|v-v_{\de\Omega}|^{p}dx\right)^{\frac{1}{p}}\le \\ \le \Tilde{C}\left(\int_{\Omega}\abs{\nabla v}^pdx\right)^{\frac{1}{p}}\left(\int_{\Omega}f^{p'}dx\right)^{\frac{1}{p'}},
      \end{multline*}
      then
      \[
      \frac{\ds\int_{\Omega}f^{p'}dx}{\ds\int_{\Omega}\abs{\nabla v}^pdx}\ge \frac{1}{\Tilde{C}^{p'}}>0.
      \]
    \end{rem} 
   \begin{thm}
Let $\Omega \subset \R^N$ be an open bounded set, with $C^{1,\gamma}$ boundary. For any $\beta>0$,
 \begin{equation*}
% \label{lower_bound}
    \dfrac{1}{\lambda_p(\beta,\Omega)}\leq \dfrac{1}{\nu_p}+\dfrac{\abs{\Omega}^{\frac{1}{p-1}}}{\beta^{\frac{1}{p-1}}P(\Omega)^{\frac{1}{p-1}}}.
   \end{equation*}
   \end{thm}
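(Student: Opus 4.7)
The plan is to reverse the logic of Section~4: combine the Thompson-type characterisation~\eqref{autovalore} with the convexity inequality~\eqref{convexity}, but this time let the auxiliary parameter tend to $0^{+}$ and invoke the fine asymptotics of Proposition~\ref{limit_prob}. The aim is to produce, for every admissible $f\ge 0$ in $L^\infty(\Omega)$, a \emph{uniform} upper bound on $J_f(\beta)/\int_\Omega f^{p'}\,dx$; by the dual formula~\eqref{autovalore}, taking the supremum over $f$ will then bound $\lambda_p(\beta,\Omega)^{-1/(p-1)}$ from above and give the stated inequality.

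Concretely, I would fix $\beta>0$ and apply~\eqref{convexity} with $\alpha\to 0^{+}$. Using the Robin boundary condition in~\eqref{plaplaciano}, one has $H(\alpha)=\alpha^{p'}\int_{\partial\Omega}u_{f,\alpha}^{p}\,d\mathcal{H}^{N-1}$, so the potentially divergent combination $J_f(\alpha)-\alpha^{-1/(p-1)}H(\alpha)$ simplifies identically to $\int_\Omega|\nabla u_{f,\alpha}|^p\,dx$, which by Proposition~\ref{limit_prob} tends to $\int_\Omega|\nabla v|^p\,dx$ with $v$ a solution of~\eqref{limitebeta0}. Writing the remaining piece as $\beta^{-1/(p-1)}\int_{\partial\Omega}\bigl(\alpha^{1/(p-1)}u_{f,\alpha}\bigr)^p\,d\mathcal{H}^{N-1}$ and using the $L^p(\partial\Omega)$-convergence of $\alpha^{1/(p-1)}u_{f,\alpha}$ to the constant $K=\bigl(P(\Omega)^{-1}\int_\Omega f\,dx\bigr)^{1/(p-1)}$ (established inside the proof of Proposition~\ref{limit_prob}), its limit equals $\bigl(\int_\Omega f\,dx\bigr)^{p'}/(\beta P(\Omega))^{1/(p-1)}$. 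Therefore
\begin{equation*}
J_f(\beta)\le \int_\Omega|\nabla v|^p\,dx+\frac{\bigl(\int_\Omega f\,dx\bigr)^{p'}}{(\beta P(\Omega))^{1/(p-1)}}.
\end{equation*}

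To finish, I would divide by $\int_\Omega f^{p'}\,dx$ and estimate term by term: the definition of $\nu_p$ as an infimum over admissible pairs $(f,v)$ immediately gives $\int_\Omega|\nabla v|^p\,dx/\int_\Omega f^{p'}\,dx\le 1/\nu_p$, while Hölder's inequality $\int_\Omega f\,dx\le|\Omega|^{1/p}\bigl(\int_\Omega f^{p'}\,dx\bigr)^{1/p'}$ yields $\bigl(\int_\Omega f\,dx\bigr)^{p'}/\int_\Omega f^{p'}\,dx\le |\Omega|^{1/(p-1)}$. Inserting the two bounds into~\eqref{autovalore} and taking the supremum over $f\ge 0$ produces the desired inequality. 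The only delicate step in the whole argument is the passage to the limit $\alpha\to 0^{+}$ in the convexity inequality: both $J_f(\alpha)$ and $\alpha^{-1/(p-1)}H(\alpha)$ individually diverge like $\alpha^{-1/(p-1)}$, and the argument succeeds only because these boundary-concentration terms cancel exactly, a cancellation that hinges on the precise value of the limiting constant $K$ identified in Proposition~\ref{limit_prob}.
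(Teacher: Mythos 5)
Your proposal is correct and follows essentially the same route as the paper's own proof: the convexity inequality \eqref{convexity} with $\alpha\to 0^{+}$, the exact cancellation $J_f(\alpha)-\alpha^{-1/(p-1)}H(\alpha)=\int_\Omega|\nabla u_{f,\alpha}|^p\,dx$ coming from the Robin boundary condition, the limits from Proposition \ref{limit_prob}, and the final combination of the definition of $\nu_p$ with H\"older's inequality and the dual characterization \eqref{autovalore}. The only discrepancy is notational: your argument (like the paper's own final display) bounds $\lambda_p(\beta,\Omega)^{-1/(p-1)}$, whereas the theorem as stated omits the exponent on the left-hand side, an inconsistency already present in the paper.
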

\begin{proof}
Let $f\ge0$, $f\in L^{\infty}(\Omega)$. By using again the convexity of $J_f(\beta)$, we have
\[
J_f(\beta)\leq J_f(\alpha)+\left(\dfrac{1}{\beta^{\frac{1}{p-1}}}-\dfrac{1}{\alpha^{\frac{1}{p-1}}}\right)\int_{\de \Omega}\abs{\abs{\nabla u_{f,\alpha}}^{p-2}\left(\dfrac{\de u_{f,\alpha}}{\de \nu}\right)}^{\frac{p}{p-1}}d\mathcal H^{N-1}.
\]
If we multiply for $\beta^{\frac{1}{p-1}}$,
\[\beta^{\frac{1}{p-1}}J_f(\beta)\leq \beta^{\frac{1}{p-1}}J_f(\alpha)+\left(1-\left(\dfrac{\beta}{\alpha}\right)^{\frac{1}{p-1}}\right)\int_{\de\Omega}\abs{\abs{\nabla u_{f,\alpha}}^{p-2}\left(\dfrac{\de u_{f,\alpha}}{\de \nu}\right)}^{\frac{p}{p-1}}d\mathcal H^{N-1}, 
\]
and using the definition of $J_f(\alpha)$ and the boundary condition 
\[J_f(\beta)\leq \int_{\Omega}\abs{\nabla u_{f,\alpha}}^p\;dx+\dfrac{1}{\beta^{\frac{1}{p-1}}}\int_{\de\Omega}\abs{\abs{\nabla u_{f,\alpha}}^{p-2}\dfrac{\de u_{f,\alpha}}{\de \nu}}^{p'}d\mathcal H^{N-1}.
\]
For $\alpha\rightarrow0^+$, we have that
   \[
    \dfrac{1}{\lambda_p(\beta,\Omega)}\leq \dfrac{1}{\nu_p}+\dfrac{\abs{\Omega}^{\frac{1}{p-1}}}{\beta^{\frac{1}{p-1}}P(\Omega)^{\frac{1}{p-1}}}.
   \]
    Indeed, from  Proposition \ref{limit_prob} we have that 
    \[\lim_{\alpha \rightarrow 0^+}\int_{\Omega}\abs{\nabla u_{f,\alpha}}^p\;dx=\int_{\Omega}\abs{\nabla v}^p\;dx,
    \]
    where $v$ is a solution to \eqref{limitebeta0}, and
    \[\lim_{\alpha \rightarrow 0^+}\int_{\de \Omega}\abs{\abs{\nabla u_{f,\alpha}}^{p-2}\dfrac{\de u_{f,\alpha}}{\de \nu}}^{\frac{p}{p-1}}\;d\mathcal{H}^{N-1}=\lim_{\alpha \rightarrow 0^+}\int_{\de \Omega}\abs{\alpha^{\frac{1}{p-1}} u_{f,\alpha}}^pd\mathcal H^{N-1}=\dfrac{1}{P(\Omega)}\left(\int_{\Omega}f\, dx\right)^{p'}.\]
    Hence, from \eqref{autovalore}
    \[
    \dfrac{1}{\lambda_p(\beta,\Omega)^{\frac{1}{p-1}}}\leq \sup_{\abs{\nabla u_f}^{p-2}\frac{\de u_f}{\de \nu}=K}\dfrac{\ds\int_ \Omega\abs{\nabla v}^pdx}{\displaystyle\int_{\Omega}f^{p'}dx}+\dfrac{1}{\beta^{\frac{1}{p-1}}P(\Omega)^{\frac{1}{p-1}}}\sup_{\int_{\Omega}f^{p'}dx<\infty}\dfrac{\left(\ds\int_{\Omega}f\,dx\right)^{p'}}{\displaystyle\int_{\Omega}f^{p'}dx}.
    \]
    Then
    \[\dfrac{1}{\lambda_p(\beta,\Omega)^{\frac{1}{p-1}}}\leq \dfrac{1}{\nu_p}+\dfrac{1}{\beta^{\frac{1}{p-1}}P(\Omega)^{\frac{1}{p-1}}}\abs{\Omega}^{\frac{1}{p-1}},
    \]
    that is the thesis.
\end{proof}

\section*{Acknowledgement}
This work has been partially supported by PRIN PNRR 2022 ``Linear and Nonlinear PDE’s: New directions and Applications'', and by GNAMPA of INdAM

%
%\addcontentsline{toc}{chapter}{Bibliografia}
\bibliographystyle{abbrv}

\begin{thebibliography}{1}
\bibitem{reverseholder}
A.~Alvino, V.~Ferone, and G.~Trombetti.
\newblock{\em On the properties of some nonlinear eigenvalues}.
\newblock {\em SIAM J. Math. Anal.}, 29(2):437--451, 1998.

\bibitem{bos}
M.-H. Bossel.
\newblock {\em Membranes \'elastiquement li\'ees inhomogènes ou sur une surface: Une nouvelle extension du th\'or\`eme isopérimétrique de
  {R}ayleigh-{F}aber-{K}rahn et de l'in\'galit\'e de {C}heeger}.
\newblock {\em C. R. Acad. Sci. Paris S\'{e}r. I Math.}, 302(1):47--50, 1986.

\bibitem{bd10} 
D.~Bucur and D.~Daners.
\newblock {\em An alternative approach to the Faber-Krahn inequality for Robin
  problems}.
\newblock {\em Calc. Var.}, 37:75--86, 2010.

\bibitem{daifu}
Q.-y. Dai and Y.-x. Fu.
\newblock {\em Faber-Krahn inequality for Robin problems involving $p$-Laplacian}.
\newblock {\em Acta Mathematica Applicatae Sinica, English Series}, 27:13--28,
  2011.


%\bibitem{fragavitone}
%F.~Della~Pietra and N.~Gavitone.
%\newblock {F}aber-{K}rahn inequality for anisotropic eigenvalue problems with
%  {R}obin boundary conditions.
%\newblock {\em Potential Anal.}, 41(4):1147--1166, 2014.

%\bibitem{della2022behavior}
%F.~Della~Pietra, C.~Nitsch, F.~Oliva, and C.~Trombetti.
%\newblock On the behavior of the first eigenvalue of the $p$-{L}aplacian with
%  {R}obin boundary conditions as p goes to 1.
%\newblock {\em Advances in Calculus of Variations}, 2022.

\bibitem{dparkiv}F.~Della Pietra.\textit{ P\'olya-type estimates for the first Robin eigenvalue of elliptic operators}. \textit{Archiv  Math.}, 123(2): 185–197, 2024.

\bibitem{anona}F.~Della Pietra, G.~di Blasio, N.~Gavitone. \textit {Sharp estimates on the first Dirichlet eigenvalue of nonlinear elliptic operators via maximum principle.} \textit{Adv. Nonlinear Anal.} {9} (2020), no. 1, 278--291.

\bibitem{mana}
F.~Della Pietra, N.~Gavitone. \textit{Sharp bounds for the first eigenvalue and the torsional rigidity related to some anisotropic operators}. \textit{Math. Nachrichten} 287, n.2-3 (2014), 194-209.

\bibitem{DPOS}
F.~Della~Pietra , F.~Oliva  and S.~Segura~de Le\'on.
\newblock {\em Behaviour of solutions to $p$-{L}aplacian with {R}obin boundary
  conditions as $p$ goes to $1$.}
\newblock {\em Proc. Roy. Soc. Edinburgh Sect. A}, 154(1):105--130, 2024.

\bibitem{della2024sharp}
F.~Della~Pietra and G.~Piscitelli.
\newblock {\em Sharp estimates for the first {R}obin eigenvalue of nonlinear
  elliptic operators.}
\newblock {\em J. Differential Eq.}, 386:269--293, 2024.

\bibitem{giorgi_smith}
T.~Giorgi and R.~Smits.
\newblock {\em Eigenvalue estimates and critical temperature in zero fields for
  enhanced surface superconductivity.}
\newblock {\em Z. Angew. Math. Phys.}, 58(2):224--245, 2007.

\bibitem{smits}
T.~Giorgi and R.~G. Smits.
\newblock Monotonicity results for the principal eigenvalue of the generalized
  {R}obin problem.
\newblock {\em Illinois J. Math.}, 49(4):1133--1143, 2005.

\bibitem{grisvard}
P.~Grisvard.
\newblock {\em Elliptic problems in nonsmooth domains}, volume~69 of {\em
  Classics in Applied Mathematics}.
\newblock Society for Industrial and Applied Mathematics (SIAM), Philadelphia,
  PA, 2011.
\newblock Reprint of the 1985 original [MR0775683], With a foreword by Susanne C. Brenner.

%\bibitem{kawohl2003isoperimetric}
%B.~Kawohl and V.~Fridman.
%\newblock Isoperimetric estimates for the first eigenvalue of the $p$-laplace operator and the cheeger constant.
%\newblock {\em Commentationes Mathematicae Universitatis Carolinae},
%  44(4):659--667, 2003.

\bibitem{anle} L\^e, A.. \textit{Eigenvalue problems for the $p$-Laplacian.} Nonlinear Anal. 64.5 (2006), 1057--1099.

\bibitem{liwang} X.~Li , K.~Wang , \textit{First Robin eigenvalue of the $p$-Laplacian on Riemannian manifolds}. \textit{Math. Z.} \textbf{ 298} (2021), no. 3, 1033--1047.

\bibitem{lieberman}
G.~M. Lieberman.
\newblock {\em Boundary regularity for solutions of degenerate elliptic equations.}
\newblock {\em Nonlinear Analysis: Theory, Methods and Applications},
  12(11):1203--1219, 1988.

\bibitem{N}  J. Ne\v{c}as, Direct methods in the theory of elliptic equations Transl. from the French,
Springer Monographs in Mathematics, Berlin: Springer (2012).

\bibitem{S} A. Savo, \textit{Optimal eigenvalue estimates for the Robin Laplacian on Riemannian manifolds}. J Diff. Eq. \textbf{268} (2020), no. 5, 2280--2308.

\bibitem{sperb}
R.~P. Sperb.
\newblock Untere und obere {S}chranken f\"ur den tiefsten {E}igenwert der
  elastisch gest\"utzten {M}embran.
\newblock {\em Z. Angew. Math. Phys.}, 23:231--244, 1972.

\bibitem{ziemer}
W.~P. Ziemer.
\newblock {\em Weakly differentiable functions}.
\newblock {\em Springer-Verlag}, Berlin, Heidelberg, 1989.
\end{thebibliography}

\end{document}